\documentclass[12pt,a4paper]{amsproc}

\usepackage{amsmath}
\usepackage{amsfonts}
\usepackage{amssymb}

\usepackage[dvips,colorlinks,bookmarks,breaklinks]{hyperref} 
\usepackage{graphicx}       
\usepackage[T1]{fontenc}
\usepackage{lmodern}        

\newcommand{\change}[1][]{}

\newcommand{\bem}[1]{}

\allowdisplaybreaks[1]

\newtheorem{theorem}{Theorem}[section]
\newtheorem{lemma}[theorem]{Lemma}
\newtheorem{proposition}[theorem]{Proposition}
\newtheorem{corollary}[theorem]{Corollary}

\theoremstyle{definition}
\newtheorem{definition}[theorem]{Definition}

\theoremstyle{remark}
\newtheorem{remark}[theorem]{Remark}

\numberwithin{equation}{subsection}
\numberwithin{theorem}{subsection}

\newcommand{\id}{\ensuremath{\mathbf{1}}}

\newcommand{\cN}{\ensuremath{\mathcal{N}}}






\title[Symmetries of the transfer operator for $\Gamma_0(N)$]{Symmetries of the transfer operator for $\Gamma_0(N)$ and a character deformation of the Selberg zeta function for $\Gamma_0(4)$}

\author{M.~Fraczek}
\address{\href{http://www.dynamik.tu-clausthal.de}{Institute of Theoretical Physics}, \href{http://www.tu-clausthal.de}{TU Clausthal}, 38678- Clausthal-Zellerfeld, Germany}
\email{\href{mailto:markus.fraczek@tu-clausthal.de}{markus.fraczek@tu-clausthal.de}}

\author{D.~Mayer}
\address{\href{http://www.dynamik.tu-clausthal.de}{Lower Saxony Professorship}, \href{http://www.tu-clausthal.de}{TU Clausthal}, 38678- Clausthal-Zellerfeld, Germany}
\email{\href{mailto:dieter.mayer@tu-clausthal.de}{dieter.mayer@tu-clausthal.de}}
\thanks{This work was supported  by the Deutsche Forschungsgemeinschaft through the DFG Research Project ``Maass wave forms and the transfer operator approach to the Phillips-Sarnak conjecture'' (Ma 633/18-1).}



\subjclass{\textbf{
Primary
11M36, 11F72,	
Secondary
11F03,	
37C30, 37D40, 47B33, 35B25, 35J05	
}}

\date{\today}

\keywords{Hecke congruence subgroups, transfer operator, factorization of Selberg's zeta function, automorphisms of Maass wave forms, singular character deformation, zero's of Selberg's zeta function}

\begin{document}

\maketitle

\begin{abstract}
The transfer operator  for $\Gamma_0(N)$ and  trivial character $\chi_0$ possesses  a finite group 
of symmetries generated by permutation matrices $P$ with $P^2=id$. Every such  symmetry  leads to a factorization of the Selberg zeta function in terms of  Fredholm determinants of a reduced transfer operator. These symmetries are related to the group of automorphisms in $GL(2,\mathbb{Z})$ of the Maass wave forms  of $\Gamma_0(N)$ .  For the group $\Gamma_0(4)$ and Selberg's character $\chi_\alpha$ there exists just one  non-trivial symmetry operator $P$. The eigenfunctions  of the corresponding reduced transfer operator with eigenvalue $\lambda=\pm 1$  are related to Maass forms even respectively odd under a corresponding  automorphism. It  then follows from a result of Sarnak and Phillips that the zeros of the Selberg function determined by the eigenvalues $\lambda=-1$ of the reduced transfer operator stay on the critical line under the deformation of the character. From numerical results we expect that on the other hand all the zeros corresponding to the eigenvalue $\lambda=+1$ leave this line  for $\alpha$ turning away from zero.
\end{abstract}

\setcounter{tocdepth}{3} \tableofcontents

\section{Introduction}
In the transfer operator approach to Selberg's zeta function for a Fuchsian group $\Gamma$ this function gets expressed 
in terms of the Fredholm determinant of this operator which is constructed from the symbolic dynamics of the geodesic 
flow on the corresponding surface of constant negative curvature.  Even if this approach has been carried out up to now 
only for certain groups like  the modular subgroups of finite index \cite{CM00},\cite{CM01},\cite{CM01a}, or  the Hecke triangle groups 
\cite{MS08}, \cite{MM10},\cite{MMS11} it has lead for instance to new points of view on this function \cite{Z02} or  the theory 
of period functions \cite{LZ01}. Another application of this method is a precise numerical calculation of the 
Selberg zeta function \cite{St08}, which seems to be impossible by other means at the 
moment. In this paper we discuss the transfer operator approach to  Selberg's zeta function for Hecke congruence 
subgroups with character,  of special interest being the behaviour of its zeros for $\Gamma_0(4)$ under the singular 
deformation of Selberg's  character \cite{Se89}.\newline
  As found numerically by M. Fraczek in \cite{F10},   certain symmetries of the transfer operator for these 
groups play thereby an 
important role. These symmetries lead to a 
factorization of the Selberg zeta function as known for the full modular group $SL(2,\mathbb{Z})$. There
it corresponds to the involution $Ju(z)=u(-z^*)$ of the Maass forms $u$ for this group \cite{E93}, \cite{LZ01}. Obviously the 
corresponding element $j=\begin{pmatrix}1&0\\0&-1\end{pmatrix}\in GL(2,\mathbb{Z})$ generates the normalizer 
group of $SL(2,\mathbb{Z})$ in $GL(2,\mathbb{Z})$. It tuns out that also the symmetries of the transfer operator for 
$\Gamma_0(N)$ correspond to automorphisms of the Maass forms from its  normalizer group  in $GL(2,\mathbb{Z})$. \newline
 For the group $\Gamma_0(4)$ with a character $\chi_\alpha$ introduced by Selberg in
\cite{Se89}  and discussed also by Phillips and Sarnak in \cite{PS85}, there is only one such non-trivial symmetry of the 
transfer operator. It corresponds to the generator of $\Gamma_0(4)$'s normalizer group in $GL(2,\mathbb{Z})$  leaving invariant the character $\chi_\alpha$. Results of Sarnak and Phillips  imply that the zeros on the critical line of one factor of  Selberg's function stay on this line under the 
deformation of the character, and hence the corresponding  Maass wave forms for the trivial character remain Maass 
wave forms. Numerical results \cite{F10} on the other hand imply, that the zeros on the critical line of the second factor of 
this function should all leave this line when the deformation is turned on. A detailed discussion of these numerical results 
and their partial proofs is in preparation \cite{BFM10}. \newline
The paper is organized as follows: in  Section 2 we recall briefly the form of the transfer operator ${\bf{L}}_{\beta,
\rho_\pi}=\begin{pmatrix}0&\mathcal{L}_{\beta,\pi}^+\\\mathcal{L}_{\beta,\pi}^-&0\end{pmatrix}$ for a general finite index subgroup 
$\Gamma$ of  the modular group $SL(2,\mathbb{Z})$ and unitary representation $\pi$ and introduce the  
symmetries $\tilde{P}=\begin{pmatrix}0&P\\P&0\end{pmatrix}$ of this operator defined by permutation matrices $P$. Any 
such symmetry leads to a factorization of the Selberg zeta function in terms of the Fredholm determinants of the reduced 
transfer operator $P\mathcal{L}_{\beta,\pi}^+$. The eigenfunctions with eigenvalue $\lambda=\pm 1$ of this reduced 
transfer operator then fulfill certain functional equations. In Section 3 we discuss the generators $J_{n,-}$ of the group of automorphisms  in $GL(2,\mathbb{Z})$ of 
the Maass forms $u$ for $\Gamma=\Gamma_0(N)$ and $\pi=\chi_0$  the trivial character. We introduce their  period 
functions $\underline{\psi}$ and derive a formula for the period function $J_{n,-}\underline{\psi}$ of the 
Maass form $J_{n,-}u$. In Section 4 we introduce Selberg's character $\chi_\alpha$ and the non-trivial automorphism 
$J_{2,-}$ of the Maass forms for $\Gamma_0(4)$. We derive again a formula for the period 
function $J_{2,-}\underline{\psi}$ of the Maass form $J_{2,-}u$ leading to a permutation 
matrix $P_{2,-}$ which defines a symmetry $\tilde{P}_{2,-}$ of the transfer operator ${\bf{L}}_{\beta,\rho_{\chi_\alpha}}$.
From this we conclude that the eigenfunctions with eigenvalue $\lambda=\pm 1$ of the operator $P_{2,-}\mathcal{L}_{\beta,\pi}^+$ correspond to Maass forms even respectively odd under the involution $J_{2,-}$. Former results of Phillips and Sarnak then imply that the zero's of the Selberg function on the critical line corresponding to the eigenfunctions with eigenvalue $\lambda =-1$ of this operator  stay on this line under the deformation of the character.

\section{The transfer operator and Selberg's zeta function for   Hecke congruence subgroups $\Gamma_0(N)$} 
The starting point of the transfer operator approach to Selberg's zeta function for a subgroup $\Gamma $ of the modular 
group $SL(2,\mathbb{Z})$ of  index $\mu=[SL(2,\mathbb{Z}):\Gamma]<\infty$ is the geodesic flow 
$\Phi_t:SM_\Gamma\to SM_\Gamma$ on the unit tangent bundle  $SM_\Gamma$ of the  corresponding surface 
$M_\Gamma = \Gamma\setminus \mathbb{H}$ of constant negative curvature. Here $\mathbb{H}=\{z=x+i y: y>0\}$ 
denotes the hyperebolic plane with hyperbolic metric $ds^2=\frac{dx^2+dy^2}{y^2}$ on which the group $\Gamma $ acts 
via M\"obius transformations $gz=\frac{az+b}{cz+d}$ if $g=\begin{pmatrix}a&b\\c&d\end{pmatrix}$. In the present paper 
we are mostly working with the Hecke congruence subgroup $$\Gamma_0(N)=\{g\in SL(2,\mathbb{Z}):g= \begin{pmatrix} 
a&b\\cN&d\end{pmatrix}\}$$ with index 
$\mu_N=N \prod \limits_{p\mid N} (1+\frac{1}{p})$, where $p$ is a prime number. If $\rho:\Gamma\to \text{end}( 
\mathbb{C}^d)$ is a unitary representation of $\Gamma$ then Selberg's zeta function $Z_{\Gamma,\rho}$ is defined as
\begin{equation}\label{Selberg}
Z_{\Gamma,\rho}(\beta)=\prod\limits_\gamma\prod\limits_{k=o}^\infty \det \left(1-\rho(g_\gamma) \exp(-(k+\beta) l_\gamma)\right),\end{equation}
 where $l_\gamma$ denotes the period of the prime periodic orbit $\gamma$
of $\Phi_t$ and $g_\gamma\in \Gamma$ is hyperbolic with $g_\gamma(\gamma)=\gamma$. In the dynamical approach 
to this function it gets expressed in terms of the so called transfer operator well known from D. Ruelle's  
thermodynamic formalism approach to dynamical systems. For general modular groups $\Gamma$ with finite index $\mu$ and 
finite dimensional representation $\pi$ this  operator   ${\bf{L}}_{\beta,\pi}:B\to B$  was determined in \cite{CM00},\cite{CM01} as
 \begin{equation}\label{bftransfer}
{\bf{L}}_{\beta,\pi}=\begin{pmatrix}0&\mathcal{L}_{\beta,\rho_\pi}^+\\ \mathcal{L}_{\beta,
\rho_\pi}^-&0\end{pmatrix},
\end{equation}
 where $B=B(D,\mathbb{C}^{\mu})\bigoplus B(D,\mathbb{C}^{\mu})$ is the Banach space of 
holomorphic functions on the disc $D=\{z: \mid z-1\mid<\frac{3}{2}\}$, and $\rho_\pi$ denotes the  representation of $SL(2,
\mathbb{Z})$ induced from the representation $\pi$ of $\Gamma$ whereas $\mathcal{L}_{\beta,\rho_\pi}^\pm$ is given for $\Re\beta >\frac{1}{2}$ by 
 
\begin{equation}\label{transfer}
\left(\mathcal{L}_{\beta,\rho_\pi}^\pm\underline{f}\right)(z)=\sum\limits_{n=1}^\infty \frac{1}
{(z+n)^{2\beta}}\rho_\pi(ST^{\pm n}) \underline{f}(\frac{1}{z+n}), \end{equation}
where $S=\begin{pmatrix}0&-1\\1&0\end{pmatrix}$ and $T=\begin{pmatrix}1&1\\0&1\end{pmatrix}$.
In the following we restrict ourselves to one dimensional unitary representations $\pi$, hence unitary characters, which we denote as usual by $\chi$. In this case the following Theorem was proved in \cite{CM01}.
\begin{theorem}
 The transfer operator ${\bf{L}}_{\beta,\chi} :B\to B$ with 
 ${\bf{L}}_{\beta,\chi}=\begin{pmatrix}0&\mathcal{L}_{\beta,\chi}^+\\ 
 \mathcal{L}_{\beta,
\chi}^-&0\end{pmatrix}$ and $\left(\mathcal{L}_{\beta,\chi}^\pm\underline{f}\right)(z)=\sum\limits_{n=1}^\infty 
\frac{1}{(z+n)^{2\beta}}\rho_\chi(ST^{\pm n}) \underline{f}(\frac{1}{z+n})$  extends to a meromorphic family of nuclear 
operators of order zero in the entire complex $\beta$ plane with possible poles at $\beta_k=\frac{1-k}{2},\, k=0,1,2,
\ldots$. The Selberg zeta function $Z_{\Gamma,\chi}$ for the modular group $\Gamma$ and character $\chi$ can be 
expressed as $Z_{\Gamma,\chi}(\beta)=\det(1-{\bf{L}}_{\beta,\chi})=\det(1-\mathcal{L}_{\beta,
\chi}^+\mathcal{L}_{\beta,\chi}^-)=\det(1-\mathcal{L}_{\beta,\chi}^-\,\mathcal{L}_{\beta,
\chi}^+)$.
\end{theorem}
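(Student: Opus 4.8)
The plan is to split the statement into three parts: (a) the two algebraic identities among Fredholm determinants; (b) the analytic claim that $\mathcal{L}_{\beta,\chi}^\pm$, and hence ${\bf{L}}_{\beta,\chi}$, is nuclear of order zero for $\Re\beta>\tfrac12$ and extends meromorphically to all of $\C$ with poles only at the $\beta_k=\tfrac{1-k}{2}$; and (c) the identification $\det(1-{\bf{L}}_{\beta,\chi})=Z_{\Gamma,\chi}(\beta)$. Part (a) I expect to be formal: setting $A=\mathcal{L}_{\beta,\chi}^+$ and $B=\mathcal{L}_{\beta,\chi}^-$, once these are trace class the vanishing-diagonal block form of ${\bf{L}}_{\beta,\chi}$ lets one factor $1-{\bf{L}}_{\beta,\chi}$ as a block-lower-triangular operator of determinant $1$ times the block-upper-triangular operator with diagonal blocks $1$ and $1-BA$, so $\det(1-{\bf{L}}_{\beta,\chi})=\det(1-BA)$; and $\det(1-BA)=\det(1-AB)$ because $AB$ and $BA$ share their non-zero spectrum with multiplicities. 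So the substance is in (b) and (c).

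For the nuclearity in (b) I would note that the $n$th summand of $\mathcal{L}_{\beta,\chi}^\pm$ is the weighted composition operator $\underline f\mapsto w_n\cdot(\underline f\circ\phi_n)$ with $\phi_n(z)=1/(z+n)$ and matrix weight $w_n(z)=(z+n)^{-2\beta}\rho_\chi(ST^{\pm n})$, and then check by a direct M\"obius computation that for every $n\ge 1$ the image $\phi_n(D)$ is a disc compactly contained in $D=\{|z-1|<\tfrac32\}$, with these images clustering near $0$ and radii $O(n^{-2})$; since $-n$ stays outside $\overline D$, the weight $w_n$ is holomorphic on a neighbourhood of $\overline D$ with $\|w_n\|_{\infty,\overline D}=O(n^{-2\Re\beta})$, uniformly on $\beta$-compacta (the unitary $\rho_\chi(ST^{\pm n})$ contributing only a bounded factor). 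I would then invoke the classical fact (Grothendieck, Ruelle, Mayer) that a composition operator induced by a holomorphic map contracting $\overline D$ into $D$ is nuclear of order zero with geometrically decaying singular values, and that multiplication by a bounded holomorphic weight preserves this; bounding the nuclear norm of the $n$th term by a constant multiple of $\|w_n\|_{\infty,\overline D}$ then makes the defining series converge in nuclear norm precisely for $2\Re\beta>1$, and a nuclear-norm limit of operators nuclear of order zero is again of that kind.

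For the meromorphic continuation the plan is to peel off the first few terms and treat the tail by a Hurwitz-Lerch zeta expansion. Choose $N_0$ (one checks $N_0=3$ suffices) so that $\phi_n(\overline D)\subset\{|z|<\tfrac12\}$ for $n\ge N_0$; then the partial sum over $1\le n<N_0$ is entire in $\beta$ with values in the nuclear operators of order zero, none of those finitely many weights having a branch point in $\overline D$. For $n\ge N_0$ insert the Taylor expansion $\underline f(u)=\sum_{k\ge0}\underline a_k u^k$ with $u=1/(z+n)$, so the tail becomes $\sum_{k\ge0}\big(\sum_{n\ge N_0}\rho_\chi(ST^{\pm n})(z+n)^{-2\beta-k}\big)\underline a_k$: a sum over $k$ of fixed operators that are finite rank in the Taylor coefficients and whose matrix entries are zeta functions of Hurwitz-Lerch type in the variable $2\beta+k$, each meromorphic in $\beta$ with at most a simple pole, which if present lies at $2\beta+k=1$, i.e.\ at $\beta=\beta_k=\tfrac{1-k}{2}$. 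Summing over $k$, using the geometric decay of the $\underline a_k$ and uniform control of the residues, gives a meromorphic operator-valued function on $\C$, nuclear of order zero away from $\{\beta_k:k\ge0\}$; adding back the entire initial part yields the continuation, so that the Fredholm determinant and its two product forms are meromorphic on $\C$.

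Part (c) is where I expect the real obstacle. By the continuation just obtained it is enough to prove $\det(1-{\bf{L}}_{\beta,\chi})=Z_{\Gamma,\chi}(\beta)$ for $\Re\beta$ large, where the determinant expands as a convergent product over periodic orbits of the coded dynamics. For $\Gamma=SL(2,\Z)$ I would code the geodesic flow on the modular surface by reduction theory / continued fractions (the one-sided factor being the Gauss map) and argue that the two branches $z\mapsto 1/(z+n)$ carrying the weights $\rho_\chi(ST^{\pm n})$ realise the natural extension required for a \emph{generating} partition (this being the structural reason for the off-diagonal block form of ${\bf{L}}_{\beta,\chi}$); matching the weight of a prime periodic orbit $\gamma$ with the product of the $w_n$ along it, Ruelle's determinant identity turns $\det(1-{\bf{L}}_{\beta,\chi})$ into $\prod_\gamma\prod_{k\ge0}\det(1-\rho_\chi(g_\gamma)e^{-(k+\beta)l_\gamma})$, which is \eqref{Selberg}. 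For a general finite-index $\Gamma$ I would reduce to this case via the induced representation $\rho_\chi=\mathrm{Ind}_\Gamma^{SL(2,\Z)}\chi$ on $\C^\mu$, using the standard identity $Z_{\Gamma,\chi}=Z_{SL(2,\Z),\rho_\chi}$, a closed geodesic on $\Gamma\backslash\HH$ being a closed geodesic on the modular surface together with the coset holonomy recorded by $\rho_\chi$. The hard part is exactly this last step: producing a finite \emph{generating} partition for the geodesic flow and verifying that the periodic-orbit weights of the resulting transfer operator reproduce \emph{precisely} the local factors of \eqref{Selberg}, with the $\pm$-doubling and the inducing construction correctly matched; by comparison the nuclearity and the Hurwitz-Lerch continuation are routine and the block-determinant identity is immediate. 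For $\Gamma=SL(2,\Z)$ this whole argument is carried out in \cite{CM00},\cite{CM01}.
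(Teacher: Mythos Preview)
The paper does not actually prove this theorem: it is stated with the attribution ``The following Theorem was proved in \cite{CM01}'' and no argument is given in the present paper. So there is no in-paper proof to compare against; the statement is imported wholesale from the Chang--Mayer references \cite{CM00},\cite{CM01}.

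That said, your sketch is a faithful outline of precisely the argument carried out in those references, and you yourself note this at the end. Your part (a) is correct and standard; your part (b) follows the classical Mayer route (Grothendieck nuclearity of contracting composition operators on $B(D)$, then meromorphic continuation by expanding the tail in Taylor series at $0$ and recognising Hurwitz--Lerch type zeta functions in the variable $2\beta+k$, whence the possible poles at $\beta_k=\tfrac{1-k}{2}$); and your part (c) correctly isolates the substantive input, namely the Series/Adler--Flatto coding of the geodesic flow on the modular surface together with the induction identity $Z_{\Gamma,\chi}=Z_{SL(2,\Z),\rho_\chi}$. One small wording issue: in (b) you appeal to ``geometric decay of the $\underline a_k$'', but the Taylor coefficients of $\underline f$ at $0$ need not decay (the distance from $0$ to $\partial D$ is only $\tfrac12$); what actually gives nuclear convergence of the $k$-sum is the decay in $k$ of the operator norms of the rank-one pieces, coming from the factor $(z+n)^{-2\beta-k}$ (uniformly small on $\overline D$ for large $k$), not from the coefficient functionals alone. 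With that adjustment your outline matches the cited proof.
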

This shows that the zero's of Selberg's function are given by those $\beta$-values for which $\lambda=1$ belongs to the spectrum $\sigma ({\bf{L}}_{\beta,\chi})$ respectively $\sigma (\mathcal{L}_{\beta,\chi}^-\,
\mathcal{L}_{\beta,\chi}^+)=\sigma (\mathcal{L}_{\beta,\chi}^+\,\mathcal{L}_{\beta,\chi}^-)$. From Selberg's trace 
formula one knows that there are two kinds of such zeros: the trivial zeros at $\beta=-k, \, k=1,2,\ldots$, and the so 
called spectral zeros. They correspond either to eigenvalues $\lambda=\beta(1-\beta)$ of the automorphic Laplacian with 
$\Re \beta = \frac{1}{2}$ or $\frac{1}{2}\leq \beta\leq 1$ respectively to resonances of the Laplacian, that means poles of 
the scattering determinant with $\Re \beta < \frac{1}{2}$ and $\Im \beta > 0$ \cite{V90}\cite{H83}. For arithmetic groups 
like the congruence subgroups with trivial or congruent character $\chi$ one knows that these resonances are on the line $\Re \beta= \frac{1}{4}$, corresponding to the nontrivial zeros $\zeta_R(2 \beta)=0$ of Riemann's zeta function $\zeta_R$
when assuming his hypothesis, respectively on the line $\Re \beta = 0$. For general Fuchsian groups and congruence 
subgroups with non-congruent character however these resonances can be anywhere in the halfplane $\Re \beta < \frac{1}{2}$.

\subsection{Symmetries of the transfer operator for $\Gamma_0(N)$}
It turns out that there exists for any $N$ a finite number $h_N$ of  $\mu_N\times \mu_N$ permutation matrices $P$ with 
$P^2=id_{\mu_N}$ such that the matrix $\tilde{P}=\begin{pmatrix}0&P\\P&0\end{pmatrix}$ commutes with the transfer operator  
${\bf{L}}_{\beta,\chi}$ and hence
\begin{equation}\label{symmetry}
P\,\mathcal{L}_{\beta,\chi}^+=\mathcal{L}_{\beta,\chi}^-P.
\end{equation} 
Thereby  $P=(P_{i\,j})_{1\leq i,j\leq \mu_N}$ acts in the Banach space $B(D,\mathbb{C}^{\mu_N})$ as $(P\underline{f})_i(z)=\sum\limits_{j=1}^{\mu_N} P_{i\,j} f_j(z)$ if 
$\underline{f}(z)=(f_i(z))_{1\leq i\leq\mu_N}$. We call such a matrix $\tilde{P}$ a symmetry of the transfer operator.  As an example consider the group $\Gamma_0(4)$ and Selberg's character $\chi_\alpha, 0\leq \alpha\leq 1$, which will be described later. Its transfer operator ${\bf{L}}_{\beta,\chi_\alpha}$  has the following form
\begin{eqnarray*}\label{transfer04}
{\bf{L}}_{\beta,\chi_\alpha} \tilde{f}_{+1}&=&\sum\limits_{q=0}^\infty f_{-3}\vert_{2\beta} \tilde{S} T^{1+4q}+f_{-4}\vert_{2\beta}\tilde{S} 
T^{2+4q}+f_{-5}\vert_{2\beta}\tilde{S} T^{3+4q}\\&+&f_{-2}\vert_{2\beta}\tilde{S} T^{4+4q}\\
{\bf{L}}_{\beta,\chi_\alpha} \tilde{f}_{+2}&=&\sum\limits_{q=0}^\infty e^{2\pi i (1+4q)\alpha}f_{-1}\vert_{2\beta} \tilde{S} T^{1+4q}+e^{2\pi i (2+4q)\alpha}f_{-1}\vert_{2\beta}\tilde{S} 
T^{2+4q}\\&+&e^{2\pi i (3+4q)\alpha}f_{-1}\vert_{2\beta}\tilde{S} T^{3+4q}+e^{2\pi i (4+4q)\alpha}f_{-1}\vert_{2\beta}\tilde{S} T^{4+4q}\\
{\bf{L}}_{\beta,\chi_\alpha} \tilde{f}_{+3}&=&\sum\limits_{q=0}^\infty e^{-2\pi i \alpha} f_{-2}\vert_{2\beta} \tilde{S} T^{1+4q}+e^{-2\pi i \alpha}f_{-3}\vert_{2\beta}\tilde{S} 
T^{2+4q}\\&+&e^{-2\pi i \alpha}f_{-4}\vert_{2\beta}\tilde{S} T^{3+4q}+e^{-2\pi i \alpha}f_{-5}\vert_{2\beta}\tilde{S} T^{4+4q}\\
{\bf{L}}_{\beta,\chi_\alpha} \tilde{f}_{+4}&=&\sum\limits_{q=0}^\infty e^{-2 \pi i \alpha(1+4q)}f_{-6}\vert_{2\beta} \tilde{S} T^{1+4q}+e^{-2 \pi i \alpha(2+4q)}f_{-6}\vert_{2\beta}\tilde{S} 
T^{2+4q}\\&+&e^{-2 \pi i \alpha(3+4q)}f_{-6}\vert_{2\beta}\tilde{S} T^{3+4q}+e^{-2 \pi i \alpha(4+4q)}f_{-6}\vert_{2\beta}\tilde{S} T^{4+4q}\\
{\bf{L}}_{\beta,\chi_\alpha} \tilde{f}_{+5}&=&\sum\limits_{q=0}^\infty e^{2\pi i\alpha} f_{-4}\vert_{2\beta} \tilde{S} T^{1+4q}+e^{2\pi i\alpha}f_{-5}\vert_{2\beta}\tilde{S} 
T^{2+4q}\\&+&e^{2\pi i\alpha}f_{-2}\vert_{2\beta}\tilde{S} T^{3+4q}+e^{2\pi i\alpha}f_{-3}\vert_{2\beta}\tilde{S} T^{4+4q}\\
{\bf{L}}_{\beta,\chi_\alpha} \tilde{f}_{+6}&=&\sum\limits_{q=0}^\infty f_{-5}\vert_{2\beta} \tilde{S} T^{1+4q}+f_{-2}\vert_{2\beta}\tilde{S} 
T^{2+4q}+f_{-3}\vert_{2\beta}\tilde{S} T^{3+4q}\\&+&f_{-4}\vert_{2\beta}\tilde{S} T^{4+4q}\\
{\bf{L}}_{\beta,\chi_\alpha} \tilde{f}_{-1}&=&\sum\limits_{q=0}^\infty f_{+5}\vert_{2\beta} \tilde{S} T^{1+4q}+f_{+4}\vert_{2\beta}\tilde{S} 
T^{2+4q}+f_{+3}\vert_{2\beta}\tilde{S} T^{3+4q}\\&+&f_{+2}\vert_{2\beta}\tilde{S} T^{4+4q}\\
{\bf{L}}_{\beta,\chi_\alpha} \tilde{f}_{-2}&=&\sum\limits_{q=0}^\infty e^{-2 \pi i \alpha(1+4q)}f_{+1}\vert_{2\beta} \tilde{S} T^{1+4q}+e^{-2 \pi i \alpha(2+4q)}f_{+1}\vert_{2\beta}\tilde{S} 
T^{2+4q}\\&+&e^{-2 \pi i \alpha(3+4q)}f_{+1}\vert_{2\beta}\tilde{S} T^{3+4q}+e^{-2 \pi i \alpha(4+4q)}f_{+1}\vert_{2\beta}\tilde{S} T^{4+4q}\\
{\bf{L}}_{\beta,\chi_\alpha} \tilde{f}_{-3}&=&\sum\limits_{q=0}^\infty e^{-2\pi i \alpha}f_{+4}\vert_{2\beta} \tilde{S} T^{1+4q}+e^{-2\pi i \alpha}f_{+3}\vert_{2\beta}\tilde{S} 
T^{2+4q}\\&+&e^{-2\pi i \alpha}f_{+2}\vert_{2\beta}\tilde{S} T^{3+4q}+e^{-2\pi i \alpha}f_{+5}\vert_{2\beta}\tilde{S} T^{4+4q}\\
{\bf{L}}_{\beta,\chi_\alpha} \tilde{f}_{-4}&=&\sum\limits_{q=0}^\infty e^{2 \pi i \alpha (1+4q)}f_{+6}\vert_{2\beta} \tilde{S} T^{1+4q}+e^{2 \pi i \alpha (2+4q)}f_{+6}\vert_{2\beta}\tilde{S} 
T^{2+4q}\\&+&e^{2 \pi i \alpha (3+4q)}f_{+6}\vert_{2\beta}\tilde{S} T^{3+4q}+e^{2 \pi i \alpha (4+4q)}f_{+6}\vert_{2\beta}\tilde{S} T^{4+4q}\end{eqnarray*}\begin{eqnarray*}
{\bf{L}}_{\beta,\chi_\alpha} \tilde{f}_{-5}&=&\sum\limits_{q=0}^\infty e^{2\pi i \alpha}f_{+2}\vert_{2\beta} \tilde{S} T^{1+4q}+e^{2\pi i \alpha}f_{+5}\vert_{2\beta}\tilde{S} 
T^{2+4q}\\&+&e^{2\pi i \alpha}f_{+4}\vert_{2\beta}\tilde{S} T^{3+4q}+e^{2\pi i \alpha}f_{+3}\vert_{2\beta}\tilde{S} T^{4+4q}\\
{\bf{L}}_{\beta,\chi_\alpha} \tilde{f}_{-6}&=&\sum\limits_{q=0}^\infty f_{+3}\vert_{2\beta} \tilde{S} T^{1+4q}+f_{+2}\vert_{2\beta}\tilde{S} T^{2+4q}+f_{+5}\vert_{2\beta}\tilde{S} T^{3+4q}\\&+&f_{+4}\vert_{2\beta}\tilde{S} T^{4+4q}
\end{eqnarray*} 
\label{C1}
where $\tilde{f}\in B(D,\mathbb{C}^{\mu})\bigoplus B(D,\mathbb{C}^{\mu})$ is given by $\tilde{f}=(\underline{f}_+,
\underline{f}_-)$ and $\underline{f}_{\pm}=(f_{\pm i})_{1\leq i\leq 6}$ and $\tilde{S}z=\frac{1}{z}$. The induced representation $\rho_\chi$ of the 
 character $\chi$ on $\Gamma_0(4)$ is defined  in terms of the coset decomposition of $SL(2,\mathbb{Z})$
\begin{equation}\label{coset}
SL(2,\mathbb{Z})=\bigcup\limits_{i=1}^6 \Gamma_0(4) R_i 
\end{equation}
 as 
\begin{equation}\label{rep}
\rho_\chi(g)_{i\;j}= 
\delta_{\Gamma_0(4)}(R_igR_j^{-1}) \chi(R_igR_j^{-1}),\quad 1\leq i,j\leq 6.
\end{equation}
Thereby we have chosen the  following representatives $R_i\in SL(2,\mathbb{Z})$ of the cosets $\Gamma_0(4)\,R_i$  
\begin{equation}\label{Repres}
R_1=id_2 ,\; R_i=ST^{i-2}, 2\leq i\leq 5 
\quad\text{and} \quad R_6= ST^2S.
\end{equation} 
 It turns out that the two  permutation matrices $P_i, i=1,2$ corresponding to the permutations $\sigma _i$ with
 \begin{equation}\label{sym1}	
\sigma_1=
	\begin{tabular}{cccccc}
	 1 & 2&3&4&5&6 \\
	 \hline 
	 1 & 2&5&4&3&6
	\end{tabular}  
		\end{equation}
\begin{equation}\label{sym2}		
	\sigma_2=
	\begin{tabular}{cccccc}
	 1 & 2&3&4&5&6 \\
	 \hline 
	 6 & 4&3&2&5&1 
	\end{tabular}  
	\end{equation}
fulfill equation (\ref{symmetry}) for $\alpha =0$ and hence the corresponding matrices $\tilde{P}_i,\, i=1,2$ commute with the transfer operator ${\bf{L}}_{\beta,\chi_0}$ where $\chi_0$ is the trivial character. The matrix $\tilde{P}_2$ on the other hand commutes even with the operator ${\bf{L}}_{\beta,\chi_\alpha}$ for all $\alpha$. Indeed the matrix $\rho_{\chi_0}(S)$ is given by the permutation $\sigma_S$ where 
\begin{equation}\label{permS}	
\sigma_S=
	\begin{tabular}{cccccc}
	 1 & 2&3&4&5&6 \\
	 \hline 
	 2 & 1&5&6&3&4
	\end{tabular}  
		\end{equation} and an easy calculation shows that $P_i \rho_{\chi_0}(S)=\rho_{\chi_0}(S) P_i,\; i=1,\, 2$. The matrix $\rho_{\chi_0}(T)$ on the other hand is given by the permutation $\sigma_T$ with 
		\begin{equation}\label{permT}	
\sigma_T=
	\begin{tabular}{cccccc}
	 1 & 2&3&4&5&6 \\
	 \hline 
	 1 &3&4&5&2&6
	\end{tabular} . 
		\end{equation}
One then checks that $P_i \rho_{\chi_0}(T)=\rho_{\chi_0}(T^{-1}) P_i,\; i=1,\, 2$. Therefore $P_i \rho_{\chi_0}(ST^n)= \rho_{\chi_0}(ST^{-n})P_i$ for all $n\in \mathbb{N}$ and $i=1,\; 2$.	For the character $\chi_\alpha $ analogous relations hold for $P_2$.\newline	
For the trivial character $\chi_0$ one can determine for the group $\Gamma_0(N)$ the number $h_N$ of matrices $P_i$ with the above properties and hence defining symmetries of the transfer operator as follows:
\begin{theorem}\label{symmetry}
For the Hecke congruence subgroup $\Gamma_0(N)$ and trivial character $\chi_0\equiv 1$ there exist $h_N$ matrices 
$\tilde{P}=\begin{pmatrix}0&P\\P&0\end{pmatrix}$  commuting with the transfer operator ${\bf{L}}_{\beta,\chi_0}$ where $P$ is a 
$\mu_N\times\mu_N$ permutation matrix with $P^2=\id_{\mu_N}$ and $P \rho_{\chi_0}(S)= \rho_{\chi_0}(S) P$ respectively $P \rho_{\chi_0}(T)= \rho_{\chi_0}(T^{-1}) P$ and hence
\begin{equation*}
P\,\mathcal{L}_{\beta,\chi_0}^+ =\mathcal{L}_{\beta,\chi_0}^-P.
\end{equation*} 
Thereby  $h_N= max\{k: k\mid 24\quad\text{and}\quad k^2\mid N\}$.  
The permutation matrices $P$ are determined by the $h_N$ generators $j$ of the normalizer group $\mathcal{N}_N$  of $\Gamma_0(N)$ in $GL(2,\mathbb{Z})$. The Selberg zeta function $Z_{\Gamma,\chi_0}$ can be written as 
$$Z_{\Gamma,\chi_0}= \det (1-P\,\mathcal{L}_{\beta,\chi_0}^+)\det (1+P\,\mathcal{L}_{\beta,\chi_0}^+).$$

\end{theorem}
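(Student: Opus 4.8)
The plan is to decompose the statement into four steps: (i) reduce the operator identity $[\tilde P,\mathbf L_{\beta,\chi_0}]=0$ to a few identities among the permutation matrices $\rho_{\chi_0}(S)$ and $\rho_{\chi_0}(T)$; (ii) match the admissible $P$'s with elements of the normalizer $\cN_N$ of $\Gamma_0(N)$ in $GL(2,\Z)$; (iii) count them using the classical structure of that normalizer; (iv) deduce the factorization of $Z_{\Gamma,\chi_0}$. For step (i), expanding the block product shows that $[\tilde P,\mathbf L_{\beta,\chi_0}]=0$ is equivalent to $P\,\cL_{\beta,\chi_0}^+=\cL_{\beta,\chi_0}^-P$ (the partner relation $P\,\cL_{\beta,\chi_0}^-=\cL_{\beta,\chi_0}^+P$ following automatically from $P^2=\id$). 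Inserting the series \eqref{transfer} and using that for the trivial character $\rho_{\chi_0}$ is a genuine permutation representation of $\SL\Z$ on the cosets $\Gamma_0(N)\backslash\SL\Z$ with $\rho_{\chi_0}(-\id)=\id$ (so $\rho_{\chi_0}(S)^2=\id$), the relation holds if and only if $P\rho_{\chi_0}(ST^n)=\rho_{\chi_0}(ST^{-n})P$ for every $n\ge1$; and this whole family is equivalent to the pair $P\rho_{\chi_0}(S)=\rho_{\chi_0}(S)P$, $P\rho_{\chi_0}(T)=\rho_{\chi_0}(T^{-1})P$ (the forward implication being immediate once $P\rho_{\chi_0}(T)^n=\rho_{\chi_0}(T)^{-n}P$, the converse being recovered from the cases $n=1,2$). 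So the problem reduces to finding the $\mu_N\times\mu_N$ permutation matrices $P$ with $P^2=\id$ that intertwine $\rho_{\chi_0}$ with its twist by $S\mapsto S^{-1}$, $T\mapsto T^{-1}$.

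Step (ii) carries the main idea. The automorphism $\theta$ of $\SL\Z$ with $\theta(S)=S^{-1}$, $\theta(T)=T^{-1}$ is conjugation by $j=\operatorname{diag}(1,-1)\in GL(2,\Z)$; since $j$ normalizes $\Gamma_0(N)$, $\theta$ descends to an involution of $\Gamma_0(N)\backslash\SL\Z$ whose permutation matrix $P_0$ satisfies $P_0\,\rho_{\chi_0}(g)\,P_0^{-1}=\rho_{\chi_0}(\theta(g))$ for all $g$, hence is one admissible matrix, with $P_0^2=\id$. Any other admissible $P$ differs from $P_0$ by a permutation centralizing all of $\rho_{\chi_0}(\SL\Z)$, and the centralizer of this transitive coset action is (anti-)isomorphic to $N_{\SL\Z}(\Gamma_0(N))/\Gamma_0(N)$. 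Thus the admissible matrices form a single coset of this centralizer; together with $P_0$ they realize the orientation-reversing part of the full normalizer $\cN_N$ of $\Gamma_0(N)$ in $GL(2,\Z)$, which is the precise sense in which the $P$'s are determined by the generators $j$ of $\cN_N$. Under this bijection the requirement $P^2=\id$ becomes $\theta(\bar n)=\bar n^{-1}$ for the class $\bar n$ of $P_0^{-1}P$; since conjugation by $j$ inverts the (lower-triangular unipotent) generators of $N_{\SL\Z}(\Gamma_0(N))/\Gamma_0(N)$, $\theta$ acts by inversion there, so \emph{every} element of the coset is admissible.

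Step (iii) is then the count $h_N:=\bigl|N_{\SL\Z}(\Gamma_0(N))/\Gamma_0(N)\bigr|$. This is the classical computation of the $\SL\Z$-normalizer of $\Gamma_0(N)$ (Newman; Atkin--Lehner; Wohlfahrt; cf.\ Conway--Norton): this normalizer is generated over $\Gamma_0(N)$ by $\begin{pmatrix}1&0\\N/h&1\end{pmatrix}$, and a direct check shows that this matrix normalizes $\Gamma_0(N)$ exactly when $h^2\mid N$ and every unit of $\Z/h\Z$ squares to $1$, i.e.\ when $h\mid 24$ --- which is the origin of the divisor $24$. Taking $h$ maximal gives $h_N=\max\{k:\ k\mid 24\text{ and }k^2\mid N\}$, so step (ii) produces exactly $h_N$ admissible permutation matrices $P$ (for $\Gamma_0(4)$ these are the two matrices given by $\sigma_1$ and $\sigma_2$; for $N=1$ the single one is $P_0=(1)$, corresponding to $\operatorname{diag}(1,-1)$, the generator of the normalizer of $\SL\Z$ in $GL(2,\Z)$).

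Finally, step (iv): by the first Theorem above $Z_{\Gamma,\chi_0}=\det(1-\cL_{\beta,\chi_0}^-\cL_{\beta,\chi_0}^+)$, and from $P\,\cL_{\beta,\chi_0}^+=\cL_{\beta,\chi_0}^-P$ together with $P^2=\id$ one gets $P\,\cL_{\beta,\chi_0}^+\,P=\cL_{\beta,\chi_0}^-$, so $\cL_{\beta,\chi_0}^-\cL_{\beta,\chi_0}^+=(P\,\cL_{\beta,\chi_0}^+)^2$ and hence
\[
Z_{\Gamma,\chi_0}=\det\bigl(1-(P\,\cL_{\beta,\chi_0}^+)^2\bigr)=\det(1-P\,\cL_{\beta,\chi_0}^+)\det(1+P\,\cL_{\beta,\chi_0}^+).
\]
(Equivalently, this reflects the splitting of the nuclear operator $\mathbf L_{\beta,\chi_0}$ along the $\pm1$-eigenspaces of the commuting involution $\tilde P$, each eigenspace being carried onto $B(D,\C^{\mu_N})$ by the first coordinate so that the restriction becomes $\pm\,\cL_{\beta,\chi_0}^+P$, whence $\det(1-P\,\cL_{\beta,\chi_0}^+)=\det(1-\cL_{\beta,\chi_0}^+P)$ finishes it.) The real obstacle is step (iii): establishing the exact order --- not just a lower bound --- of $N_{\SL\Z}(\Gamma_0(N))/\Gamma_0(N)$ and its cyclic structure, so that $\theta$-inversion guarantees $P^2=\id$ throughout; together with checking in step (ii) that no admissible $P$ lies outside the normalizer picture. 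Steps (i) and (iv) are essentially formal.
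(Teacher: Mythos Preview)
Your argument is correct and the four steps are sound; in particular the reduction~(i), the centralizer description~(ii), the normalizer count~(iii), and the determinant factorization~(iv) all go through. The route, however, differs substantially from the paper's. The paper does \emph{not} argue abstractly via the centralizer of the transitive coset representation. Instead it postpones the proof to Section~3, where it constructs the matrices $P_{n,-}$ explicitly by passing through Maass forms and their vector-valued period functions: for each normalizer element $j_{n,-}$ it defines an automorphism $J_{n,-}$ of the Maass forms, computes (Theorem~3.3.1) how $J_{n,-}$ acts on the period function $\underline\psi$ via an $\eta$-form integral, obtaining $J_{n,-}\psi_j(\zeta)=\zeta^{-2\beta}\psi_{\lambda_{n,-}\circ\sigma\circ\delta(j)}(1/\zeta)$, reads off a matrix $Q_{n,-}$ from this formula, and then verifies directly (Theorem~3.3.3) that $P_{n,-}:=\rho_{\chi_0}(S)\,Q_{n,-}$ satisfies the required intertwining relations with $\rho_{\chi_0}(S)$ and $\rho_{\chi_0}(T)$. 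The normalizer count $h_N$ is established separately in Proposition~3.1.1, by essentially the same Lehner--Newman/Conway--Norton input you invoke.

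Your approach is cleaner for the bare statement: it yields existence and the exact count in one group-theoretic stroke, and needs no analysis. What the paper's longer detour buys is the identification, built into the construction itself, of each symmetry $P_{n,-}$ with a concrete geometric involution $J_{n,-}$ on the space of Maass forms. This is not decorative: the main application in Section~4 (the Phillips--Sarnak stability of zeros under the character deformation) rests on knowing that eigenfunctions of $P\,\mathcal L^+_{\beta,\chi}$ with eigenvalue $\pm1$ correspond to Maass forms of definite parity under a specific $J_{n,-}$, and that correspondence is exactly what the period-function computation supplies. Your abstract centralizer argument proves the theorem as stated but would still need that analytic bridge to reach the paper's downstream conclusions.
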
.
\begin{remark}
For $\Gamma_0(4)$  obviously $h_N=2$  and there exist according to Theorem \ref{symmetry} two such permutation matrices $P_1$ and $P_2$ which  indeed are given by the aforementioned permutations $\sigma_i,\, i=1,2$.  Since $P_1\,P_2 = P_2\,P_1$ and $P_i\,\mathcal{L}_{\beta,\chi_0}^+=\mathcal{L}_{\beta,\chi_0}^-\, P_i,\, i=1,2$ we find \begin{center}$P_1\,P_2\, P_1\,\mathcal{L}_{\beta,\chi_0}^+=P_1\,P_2\, 
\mathcal{L}_{\beta,\chi_0}^- \,P_1=P_1\,\mathcal{L}_{\beta,\chi_0}^+P_2\,P_1=P_1\,\mathcal{L}_{\beta,\chi_0}^+P_1\,P_2$ \end{center}
and the operators $P_1\,P_2$ and $P_1\,\mathcal{L}_{\beta,\chi_0}^+$ commute, where the operator $P_1\,P_2$ corresponds to the permutation 		\begin{equation}\label{perm sigma}	
\sigma=
	\begin{tabular}{cccccc}
	 1 & 2&3&4&5&6 \\
	 \hline 
	 6 &4&5&2&3&1
	\end{tabular} . 
		\end{equation}
 We find also $P_1\,P_2\,\mathcal{L}_{\beta,\chi_0}^+=\mathcal{L}_{\beta,\chi_0}^+\,P_1\,P_2$. But $(P_1\,P_2)^2=id_{6}$, hence this 
operator has only the eigenvalues $\lambda=\pm 1$ and the Banach space $B(D,\mathbb{C}^{6})$ decomposes as 
$B(D,\mathbb{C}^{6})=B(D,\mathbb{C}^{6})_+ \oplus B(D,\mathbb{C}^{6})_-$ with 
$P_1\,P_2\underline{f}_{\pm}=\pm \underline{f}_\pm$ for $\underline{f}_\pm \in B(D,\mathbb{C}^{6})_\pm$. The elements $\underline{f}_\epsilon\in B(D,\mathbb{C}^{6})_\epsilon,\, \epsilon=\pm $ have therefore the form $(\underline{f}_\epsilon)_i=f_i,\, 1\leq i\leq 3$ respectively $(\underline{f}_\epsilon)_{\sigma(i)}=\epsilon f_i,\, 1\leq i\leq 3$.
Denote by \begin{center}
$\mathcal{L}_{\beta,\chi_0,\pm}^+:B(D,\mathbb{C}^{6})_\pm \to B(D,\mathbb{C}^{6})_\pm$
\end{center}
respectively
\begin{center}$P_1\,\mathcal{L}_{\beta,\chi_0,\pm}^+:B(D,\mathbb{C}^{6})_\pm \to B(D,\mathbb{C}^{6})_\pm$
\end{center} 
the restriction of the operators $ \mathcal{L}_{\beta,\chi_0}^+$ respectively $P_1\,\mathcal{L}_{\beta,\chi_0}^+$ to the subspace $B(D,\mathbb{C}^{6})_\pm$, which obiously is isomorphic to the space $B(D,\mathbb{C}^{3})$. Then 
$\det (1\pm P_1\,\mathcal{L}_{\beta,\chi_0}^+)=\det (1\pm P_1\,\mathcal{L}_{\beta,\chi_0,+}^+)\det (1\pm P_1\,\mathcal{L}_{\beta,\chi_0,-}^+)$, where the operator $ P_1\,\mathcal{L}_{\beta,\chi_0,\epsilon}^+:B(D,\mathbb{C}^{3})\to B(D,\mathbb{C}^{3})$ can be written as 
\begin{equation}
 P_1\,\mathcal{L}_{\beta,\chi_0,\epsilon}^+=\begin{pmatrix} 0&\epsilon\mathcal{L}_{\beta,2}+\mathcal{L}_{\beta,4}&\epsilon \mathcal{L}_{\beta,1}+\mathcal{L}_{\beta,3}\\\mathcal{L}_\beta&0&0\\0& \mathcal{L}_{\beta,1}+ \epsilon\mathcal{L}_{\beta,3}&\epsilon 
\mathcal{L}_{\beta,2}+\mathcal{L}_{\beta,4}
\end{pmatrix}. 
\end{equation}
with $\mathcal{L}_{\beta,k}f=\sum\limits_{q=0}^\infty f|_{2\beta}\tilde{S}T^{1+k q},\, 1\leq k\leq 4$ and $\mathcal{L}_\beta=\sum\limits_{k=1}^4 \mathcal{L}_{\beta,k}$. The operator $\mathcal{L}_{\beta,\chi_0,\epsilon}^+$ in the space $B(D,\mathbb{C}^{3})$ on the other hand has the form 

\begin{equation}
 \mathcal{L}_{\beta,\chi_0}^+{_\epsilon}=\begin{pmatrix} 0&\epsilon\mathcal{L}_{\beta,2}+\mathcal{L}_{\beta,4}&\epsilon \mathcal{L}_{\beta,1}+\mathcal{L}_{\beta,3}\\\mathcal{L}_\beta&0&0\\0& \epsilon \mathcal{L}_{\beta,1}+ \mathcal{L}_{\beta,3}&\mathcal{L}_{\beta,2}+\epsilon\mathcal{L}_{\beta,4}
\end{pmatrix}.
\end{equation}
To relate the Fredholm determinants of the operators $(P_1\mathcal{L}_{\beta,\chi_0,\epsilon}^+)^2 $ and $(\mathcal{L}_{\beta,\chi_0,\epsilon}^+)^2$ we use the following simple Lemma
\begin{lemma}
Let be $\alpha, \beta$ and $\gamma$ complex numbers and $\epsilon=\pm 1$. Then $\lambda$ is an eigenvalue of the matrix $\mathbb{L}_1=\begin{pmatrix}
0&\alpha&\beta \\ \gamma &0&0 \\ 0&\beta&\epsilon\alpha
\end{pmatrix}$ iff $\epsilon \lambda$ is an eigenvalue of the matrix  $\mathbb{L}_2=\begin{pmatrix}
0&\alpha&\beta \\ \gamma &0&0 \\ 0&\epsilon \beta&\alpha
\end{pmatrix}$. 
\end{lemma}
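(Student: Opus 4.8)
The plan is to reduce the claim to the observation that $\mathbb{L}_1$ and $\epsilon\,\mathbb{L}_2$ are conjugate by an explicit diagonal matrix. Set $D=\mathrm{diag}(\epsilon,1,1)$; since $\epsilon=\pm1$ one has $\epsilon^{-1}=\epsilon$ and hence $D^{-1}=D$. Conjugation by a diagonal matrix rescales the $(i,j)$ entry by the factor $d_i/d_j$, so in $D\,\mathbb{L}_1\,D^{-1}$ the entries in positions $(1,2)$ and $(1,3)$ acquire a factor $\epsilon$, the entry in position $(2,1)$ acquires a factor $\epsilon^{-1}=\epsilon$, and all other entries --- in particular those in positions $(3,2)$ and $(3,3)$ --- are left unchanged. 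I would carry this out entry by entry and compare with $\epsilon\,\mathbb{L}_2=\left(\begin{smallmatrix}0&\epsilon\alpha&\epsilon\beta\\\epsilon\gamma&0&0\\0&\beta&\epsilon\alpha\end{smallmatrix}\right)$: every slot matches, the only comparison needing a word being position $(3,2)$, where $\epsilon\cdot\epsilon\beta=\beta$ reproduces the corresponding entry of $\mathbb{L}_1$. This establishes the identity $D\,\mathbb{L}_1\,D^{-1}=\epsilon\,\mathbb{L}_2$.

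Granting this identity, $\mathbb{L}_1$ and $\epsilon\,\mathbb{L}_2$ have the same eigenvalues, and since $\epsilon\neq0$ the spectrum of $\epsilon\,\mathbb{L}_2$ is exactly $\epsilon$ times the spectrum of $\mathbb{L}_2$. Hence $\lambda$ is an eigenvalue of $\mathbb{L}_1$ if and only if $\lambda$ lies in $\epsilon\cdot\sigma(\mathbb{L}_2)$, i.e.\ (using $\epsilon^2=1$) if and only if $\epsilon\lambda$ is an eigenvalue of $\mathbb{L}_2$, which is the assertion.

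As an independent check one can instead compute the two characteristic polynomials directly by expanding along the first row, obtaining $\det(\lambda\,\id-\mathbb{L}_1)=\lambda^3-\epsilon\alpha\lambda^2-\alpha\gamma\lambda+\epsilon\alpha^2\gamma-\gamma\beta^2$ and $\det(\mu\,\id-\mathbb{L}_2)=\mu^3-\alpha\mu^2-\alpha\gamma\mu+\alpha^2\gamma-\epsilon\gamma\beta^2$, and then verifying that the substitution $\mu=\epsilon\lambda$ gives $\det(\epsilon\lambda\,\id-\mathbb{L}_2)=\epsilon\,\det(\lambda\,\id-\mathbb{L}_1)$; since $\epsilon\neq0$ the zero sets of the two polynomials are then in bijection via $\lambda\mapsto\epsilon\lambda$.

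There is no real obstacle here: the statement is essentially a one-line similarity once the diagonal matrix $D$ has been guessed, and the only place demanding care is tracking the factor $\epsilon$ consistently through the $(2,1)$ and $(3,2)$ entries, together with the repeated use of $\epsilon^{-1}=\epsilon$.
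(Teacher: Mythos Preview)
Your proof is correct. The paper's own proof is a single line, ``The proof follows from the characteristic polynomial of the two matrices,'' which is exactly your second paragraph (the independent check). Your primary argument via the explicit similarity $D\,\mathbb{L}_1\,D^{-1}=\epsilon\,\mathbb{L}_2$ with $D=\mathrm{diag}(\epsilon,1,1)$ is a slightly more structural route: it explains \emph{why} the characteristic polynomials are related by $\mu\mapsto\epsilon\lambda$ rather than merely verifying it, and it would also transfer eigenvectors and Jordan structure if those were ever needed. Either way the content is the same elementary fact, and your write-up subsumes the paper's.
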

\begin{proof}
The proof follows from the characteristic polynomial of the two matrices. \end{proof}
This shows that $\text{trace}\, \mathbb{L}_1^{n}=\sum\limits_{k=1}^3 {(\mathbb{L}_1^{n})}_{k,k}= \epsilon^n \;\text{trace} \, \mathbb{L}_2^{n}=\epsilon^n \sum\limits_{k=1}^3 {(\mathbb{L}_2^{n})}_{k,k}$ for all $n\in \mathbb{N}$.
But then is not too difficult to see that also $ \text{trace} \,(\mathcal{L}_{\beta,\chi_0,\epsilon}^+)^{n}=\epsilon^n\, \text{trace} \,(P_1\mathcal{L}_{\beta,\chi_0,\epsilon}^+)^{n}$ for all $n\in\mathbb{N}$ and hence $\det (1- (P_1 \mathcal{L}_{\beta,\chi_0,\epsilon}^+)^2)=\det (1-(\mathcal{L}_{\beta,\chi_0,\epsilon}^+)^2)$ for $\epsilon =\pm$. Therefore the Selberg zeta function $Z_{\Gamma_0(4),\chi_0}(\beta)$ for the group $\Gamma_0(4)$ with trivial character $\chi_0$ can be written as 
\begin{eqnarray} Z_{\Gamma_0(4),\chi_0}(\beta)&=&\det \left(1- (P_1 \mathcal{L}_{\beta,\chi_0}^+)^2\right)=\det \left(1-(\mathcal{L}_{\beta,\chi_0}^+)^2\right)\nonumber\\
&=&\det (1-\mathcal{L}_{\beta,\chi_0}^+)\det(1+\mathcal{L}_{\beta,\chi_0}^+)
\end{eqnarray}
Furthermore this function factorizes in this case also as 
\begin{eqnarray}Z_{\Gamma_0(4),\chi_0}(\beta)&=& \det 
(1-P_1\,\mathcal{L}_{\beta,\chi_0,+}^+)\det (1-P_1\,\mathcal{L}_{\beta,\chi_0,-}^+)\nonumber \\&\times&\det (1+P_1\,\mathcal{L}_{\beta,
\chi_0,+}^+)\det (1+P_1\,\mathcal{L}_{\beta,\chi_0-}^+)\end{eqnarray}
\end{remark}
To prove  Theorem \ref{symmetry} we relate the matrices $P$ to the generating automorphisms in $GL(2,\mathbb{Z})$ of the Maass wave forms for $\Gamma_0(N)$ and can determine this way the explicit form of these matrices $P$. For this we  derive  in a first step a Lewis type functional equation for the eigenfunctions of the 
operator $P\,\mathcal{L}_{\beta,\chi}^+ $ with eigenvalue $\lambda = \pm 1$.  

\subsection{A Lewis type functional equation}

Consider any finite index modular subgroup $\Gamma$ and any unitary character $\chi:\Gamma\to \mathbb{C}^\star$ respectively the induced representation $\rho_\chi$ of $SL(2,\mathbb{Z})$.  
Assume there exists a symmetry $\tilde{P}=\begin{pmatrix}0&P\\P&0\end{pmatrix}$ with  $P$ a permutation matrix with the properties analogous to  Theorem \ref{symmetry}, and commuting with the transfer operator 
${\bf{L}}_{\beta,\chi}=\begin{pmatrix}0&\mathcal{L}_{\beta,\rho_\chi}^+\\ \mathcal{L}_{\beta,
\rho_\chi}^-&0\end{pmatrix}$ of $\Gamma$. If $\underline{f}$ is an eigenfunction of the operator $P\,\mathcal{L}_{\beta,\chi}^+$ with 
eigenvalue $\lambda = \pm 1$ then one can show 
\begin{proposition}
If $P\,\mathcal{L}_{\beta,\chi}^+\underline{f}(\zeta)=\lambda \underline{f}(\zeta)$ with $\lambda =\pm 1$ 
then the function $\underline{\Psi}(\zeta):= P \rho_\chi (T^{-1}S)P \underline{f} (\zeta-1) $ fulfills the functional equations
\begin{equation}\label{Lewis1}
\underline{\Psi}(\zeta)= 
\lambda \zeta^{-2 \beta} P \,\rho_\chi (S) \,\underline{\Psi}(\frac{1}{\zeta}),
\end{equation}
respectively
\begin{equation}\label{Lewis2}
\underline{\Psi}(\zeta) -\rho_\chi(T^{-1})\,\underline{\Psi}(\zeta +1) - (\zeta +1)^{-2\beta}\,\rho_\chi(T'^{-1})\,\underline{\Psi}(\frac{\zeta}{\zeta + 1}) =\underline{0}, 
\end{equation}
 where  $ T'= S T^{-1} S$.
 On the other hand every solution $	\underline{\Psi}$ of equations (\ref{Lewis1}) and (\ref{Lewis2}) holomorphic in the cut $\beta$-plane $(-\infty,0]$ with $\Psi_i(z)=o(z^{-\min\{1,2\Re s\}})$ as $z\downarrow 0$, respectively $\Psi_i(z)=o(z^{-\min\{0,2\Re s-1\}})$ as $z \to \infty$, determines an eigenfunction $\underline{f}$ with eigenvalue $\lambda =\pm 1$ of the operator $P\,\mathcal{L}_{\beta,\chi}^+$.
\end{proposition}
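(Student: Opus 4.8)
The plan is to translate the eigenvalue equation $P\,\mathcal{L}_{\beta,\chi}^+\underline{f}=\lambda\underline{f}$ into the two functional equations by direct substitution, and conversely to recover $\underline{f}$ from a solution $\underline{\Psi}$ by reading the series \eqref{transfer} backwards. The computation is modelled on the classical Lewis equation for the modular group; the novelty is only the bookkeeping of the permutation matrix $P$ and the conjugates $T'=ST^{-1}S$ that arise from it.

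First I would expand the hypothesis. Writing the definition of $\mathcal{L}_{\beta,\chi}^+$ and using the intertwining relations $P\rho_\chi(S)=\rho_\chi(S)P$, $P\rho_\chi(T)=\rho_\chi(T^{-1})P$ (so $P\rho_\chi(ST^n)=\rho_\chi(ST^{-n})P$), the equation $P\mathcal{L}_{\beta,\chi}^+\underline f(\zeta)=\lambda\underline f(\zeta)$ becomes
\begin{equation*}
\sum_{n\ge 1}\frac{1}{(\zeta+n)^{2\beta}}\,\rho_\chi(ST^{-n})\,P\,\underline f\!\left(\tfrac{1}{\zeta+n}\right)=\lambda\,\underline f(\zeta).
\end{equation*}
Now set $\underline{\Psi}(\zeta):=P\rho_\chi(T^{-1}S)P\,\underline f(\zeta-1)$, equivalently $\underline f(\zeta-1)=P\rho_\chi(S^{-1}T)P\,\underline\Psi(\zeta)=\rho_\chi(S^{-1}T^{-1})P\,\underline\Psi(\zeta)$ after moving $P$ through. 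Substituting and shifting the argument by $1$ should produce \eqref{Lewis1} directly: the single term $n$ and the relation $ST^{-n}\cdot(\text{conjugation by }P)$ collapses, upon choosing the right variable, to the reflection $\zeta\mapsto 1/\zeta$ weighted by $\zeta^{-2\beta}P\rho_\chi(S)$, with the factor $\lambda$ carried along. For \eqref{Lewis2} I would use the standard "three-term" trick: the map $\zeta\mapsto\zeta+1$ applied to $\underline\Psi$ strips off the $n=1$ term of the series hidden in $\underline\Psi$, and the remaining sum is recognised, after the substitution $\zeta/(\zeta+1)$ and a parabolic conjugation (this is where $T'=ST^{-1}S$ and $\rho_\chi(T'^{-1})$ enter, because $P$ has turned $T$ into $T^{-1}$ on one side), as $\underline\Psi$ itself; rearranging gives the claimed identity with $\underline 0$ on the right. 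Crucially, \eqref{Lewis2} is really the equation $\mathcal L^+$-fixed-point written without reference to $\lambda$, so it does not involve $P$ on the outside, while \eqref{Lewis1} is the equation that encodes the symmetry and hence carries $\lambda$ and $P$.

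For the converse, I would assume $\underline\Psi$ solves \eqref{Lewis1}–\eqref{Lewis2}, is holomorphic on the cut plane $\C\setminus(-\infty,0]$, and satisfies the stated decay $\Psi_i(z)=o(z^{-\min\{1,2\Re\beta\}})$ as $z\downarrow 0$ and $\Psi_i(z)=o(z^{-\min\{0,2\Re\beta-1\}})$ as $z\to\infty$. Iterating the three-term equation \eqref{Lewis2} — substituting $\zeta\mapsto\zeta+1$ repeatedly and summing — produces, exactly as in Lewis–Zagier, a convergent series
\begin{equation*}
\underline\Psi(\zeta)=\sum_{n\ge 1}(\zeta+n)^{-2\beta}\,\rho_\chi(T'^{-1})\cdots\,\underline\Psi\!\left(\tfrac{1}{\zeta+n}\right)+\text{(boundary term)},
\end{equation*}
where the decay at $\infty$ kills the boundary term and the decay at $0$ guarantees convergence and holomorphy on the disc $D$. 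Defining $\underline f(\zeta):=P\rho_\chi(S^{-1}T^{-1})P\,\underline\Psi(\zeta+1)$ (the inverse of the substitution above), this series is precisely $\mathcal L_{\beta,\chi}^+\underline f$; feeding in \eqref{Lewis1} to identify the reflected term $\underline\Psi(1/\zeta)$ then shows $P\mathcal L_{\beta,\chi}^+\underline f=\lambda\underline f$, and $\underline f$ lies in $B(D,\C^\mu)$ by the growth hypotheses. I expect the main obstacle to be the second part: verifying that the summation of the three-term relation converges to a holomorphic function on all of $D$ and that the asymptotic conditions are exactly what is needed (this is the technical heart of the Lewis-equation correspondence), together with keeping the conjugations by $P$ and the identifications $T\leftrightarrow T^{-1}$, $T'=ST^{-1}S$ straight throughout — a place where sign and order-of-composition errors are easy to make but where no deep idea is required.
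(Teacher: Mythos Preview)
Your overall strategy matches the paper's, but the order of derivation in the forward direction is not what you expect, and your description of how \eqref{Lewis1} arises is too optimistic.

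In the paper the argument does \emph{not} obtain \eqref{Lewis1} ``directly.'' It first performs the three-term subtraction you describe (shift $\zeta\mapsto\zeta+1$ in the eigenvalue equation, multiply by $P\rho_\chi(STS)P$, subtract) to obtain an intermediate identity for $\underline\psi(\zeta):=P\underline f(\zeta-1)$:
\[
\underline\psi(\zeta)-\rho_\chi(STS)\underline\psi(\zeta+1)-\lambda\zeta^{-2\beta}\rho_\chi(ST)P\,\underline\psi\!\left(\tfrac{\zeta+1}{\zeta}\right)=\underline 0.
\]
Only then does \eqref{Lewis1} emerge, by a comparison trick: replace $\zeta$ by $1/\zeta$ in this identity, multiply by $\zeta^{-2\beta}\rho_\chi(STS)P\rho_\chi(T^{-1}S)$, and match against the original identity using $P\rho_\chi(S)=\rho_\chi(S)P$. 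The function $\underline\Psi=P\rho_\chi(T^{-1}S)\,\underline\psi$ is introduced precisely so that this comparison collapses to \eqref{Lewis1}. Equation \eqref{Lewis2} is then obtained by substituting $\underline\psi=\rho_\chi(ST)P\,\underline\Psi$ back into the intermediate identity and using \eqref{Lewis1} plus $P\rho_\chi(T)P=\rho_\chi(T^{-1})$. So the logical flow is: three-term identity $\Rightarrow$ \eqref{Lewis1} $\Rightarrow$ \eqref{Lewis2}, rather than your proposed ``\eqref{Lewis1} direct, \eqref{Lewis2} via three-term.'' Your claim that a single term ``collapses'' to the reflection is not how it works; the reflection comes from comparing two copies of the full three-term identity.

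For the converse your plan is essentially what the paper does, but the paper does not redo the Lewis--Zagier iteration from scratch. It invokes Deitmar--Hilgert (their Lemma~4.1 and Remark~4.2) to supply the asymptotic expansions of a solution of \eqref{Lewis2} and the series representation
\[
\underline\Psi(\zeta)=\zeta^{-2\beta}\sum_{n\ge 0}(n+\zeta^{-1})^{-2\beta}\rho_\chi(T'^{-n}T^{-1})\,\underline\Psi\!\left(1+\tfrac{1}{n+\zeta^{-1}}\right),
\]
valid under the decay hypothesis at $0$. Then \eqref{Lewis1} is applied to the left side, the variable is shifted, and the definition $\underline\Psi(\zeta+1)=P\rho_\chi(T^{-1}S)P\,\underline f(\zeta)$ is inserted; a short chain of identities using $\rho_\chi(ST^{-1})P=P\rho_\chi(ST)$ turns this into $\lambda\underline f=P\mathcal L_{\beta,\chi}^+\underline f$. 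Your sketch has the right shape, but be aware that the iteration you wrote produces arguments near $1$, not $1/(\zeta+n)$; the passage to the transfer-operator form genuinely requires \eqref{Lewis1}, and the convergence/asymptotics step is exactly the Deitmar--Hilgert input rather than something you can wave through.
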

\begin{proof}
 Let $\Re \beta > \frac{1}{2}$. If	$P \mathcal{L}_\beta^+\underline{f}(\zeta)=\lambda \underline{f}(\zeta),\, \lambda=\pm 1$ 
	then obviously \newline
	$P \rho_\chi(STS) P P \mathcal{L}_\beta^+\underline{f}(\zeta+1)=\lambda P \rho_\chi(STS) P \underline{f}(\zeta+1).$
	Subtracting the two equations leads to
$$ \lambda \underline{f}(\zeta) - \lambda P \rho_\chi(STS) P \underline{f}(\zeta+1)-(\zeta+1)^{-2\beta} P \rho_\chi(ST)\underline{f}(\frac{1}{\zeta+1})= \underline{0},$$
	and hence the function $\underline{\psi}(\zeta):= P \underline{f}(\zeta-1)$ fulfills the equation 
	\begin{equation}\label{Eq1}
	\underline{\psi}(\zeta)-\rho_\chi(STS)\underline{\psi}(\zeta+1)- \lambda \zeta^{-2 \beta}\rho_\chi(ST) P\underline{\psi}(\frac{\zeta+1}{\zeta})= \underline{0}. 
	\end{equation}
	Replacing there $\zeta$ by $\frac{1}{\zeta}$ and multiplying the resulting equation by  $\zeta^{-2 \beta} \rho_\chi (STS) P \rho_\chi(T^{-1}S)$ gives
	$$\zeta^{-2\beta}\rho_\chi(STS) P \rho_\chi (T^{-1}S)\underline{\psi}(\frac{1}{\zeta})-\zeta^{-2\beta}\rho_\chi(STS) P \rho_\chi(S)\underline{\psi}(\frac{\zeta+1}{\zeta})-$$
	$$-\lambda  \rho_\chi(STS) \underline{\psi}(\zeta+1)= \underline{0} .  $$
	
	Since $\rho_\chi (S) P = P \rho_\chi(S)$ one finds, comparing with equation (\ref{Eq1}),
  $$\underline{\psi}(\zeta)=\lambda \zeta^{-2\beta}\rho_\chi(STS) P \rho_\chi(T^{-1}S)\underline{\psi}(\frac{1}{\zeta}).$$
   Hence the function
		 $\tilde{\underline{\psi}}:=\rho_\chi(T^{-1}S)\underline{\psi}$ fulfills  equation (\ref{Lewis1}).
	The same equation is then fulfilled also  by the function 	  
\begin{equation}\label{psif}\underline{\Psi}(\zeta):=  P\tilde{\underline{\psi}}(\zeta )=P \,\rho_\chi(T^{-1}S)\, P\, \underline{f}(\zeta-1),\end{equation} 
that is
\begin{equation}
\underline{\Psi}(\zeta )= \lambda \zeta^{-2\beta} P \rho_\chi (S)  \underline{\Psi}(\frac{1}{\zeta}) .
\end{equation}
Inserting finally $\underline{\psi}(\zeta)=\rho_\chi(ST)P \underline{\Psi}(\zeta)$ into equation (\ref{Eq1}) and using (\ref{Lewis1}) leads to the equation 
$$\underline{\Psi}(\zeta)-P \rho_\chi(T)P \underline{\Psi}(\zeta+1)-  (\zeta+1)^{-2 \beta} P\rho_\chi(T') P\underline{\Psi}(\frac{\zeta}{\zeta+1})= \underline{0}. $$
But by assumption $P \rho_\chi(T) P= \rho_\chi(T^{-1})$, hence $P\rho_\chi(T')P= \rho_\chi(T'^{-1})$ and therefore
\begin{equation}
\underline{\Psi}(\zeta)-\rho_\chi(T^{-1}) \underline{\Psi}(\zeta+1)-  (\zeta+1)^{-2 \beta} \rho_\chi(T'^{-1}) \underline{\Psi}(\frac{\zeta}{\zeta+1})= \underline{0}.
\end{equation}
Hence for $\Re \beta >\frac{1}{2}$ the first part of the proposition holds. By analytic continuation in $\beta$ one proves the general case.\\
To prove the second part we follow the arguments of Deitmar and Hilgert in \cite{DH07} (see their Lemma 4.1): if 
$\underline{\Psi}(\zeta)$ is a solution of the Lewis equation (\ref{Lewis2}) with $\beta\notin \mathbb{Z}$ then $\underline{\Psi}$ has the 
following asymptotic expansions:$$\underline{\Psi}(\zeta)\thicksim_{\zeta\to 0} \zeta^{2 \beta} Q_0(\frac{1}{\zeta})+\sum_{l=-1}^\infty \underline{C}_l^* \zeta^l,$$
$$\underline{\Psi} (\zeta)\thicksim_{\zeta\to\infty} Q_\infty (\zeta)+\sum_{l=-1}^\infty 
\underline{C}_l^{*'}
\zeta^{-l-2 \beta},
$$
where $Q_0,Q_\infty:\mathbb{C}\to \mathbb{C}^\mu$ are smooth functions with $Q_0(\zeta+1)=\rho_\chi(T') Q_0(\zeta)$ 
respectively $Q_\infty (\zeta+1)=\rho_\chi(T) Q_\infty (\zeta)$ and the constants $\underline{C}_l^*$ and $\underline{C}_l^{*\prime}$ are determined by the 
Taylor coefficients $\underline{C}_m=\frac{1}{m!}\underline{\Psi}^{(m)}(1)$. The functions $Q_0$ and $Q_\infty$ are defined as follows for 
general $\beta$ with $-2 \Re \beta <M\in\mathbb{N}$:
\begin{eqnarray*}
Q_0(\zeta)&:=&\zeta^{-2\beta} \underline{\Psi}(\frac{1}{\zeta})-\sum\limits_{m=0}^M \zeta_{\rho_\chi}
(m+2\beta,z)\underline{C}_m\\&-&\sum\limits_{n=0}^\infty 
(n+\zeta)^{-2\beta}\rho_\chi(T'^{-n}T^{-1})\left(\underline{\Psi}(1+\frac{1}{n+\zeta})
-\sum\limits_{m=0}^M \frac{\underline{C}_m}
{(n+\zeta)^m)}\right)
\end{eqnarray*} 
respectively
\begin{eqnarray*}
Q_\infty(\zeta)&:=& \underline{\Psi} (\zeta)-\sum\limits_{m=0}^M 
\zeta_{\rho_\chi}^{'} (m+2\beta,\zeta+1) \underline{C}_m\\&-&
\sum\limits_{n=0}^\infty 
(n+\zeta)^{-2\beta}\rho_\chi(T^{-(n-1)}T'^{-1})\left(\underline{\Psi}(1-\frac{1}{n+\zeta})-\sum\limits_{m=0}^M \frac{\underline{C}_m}{(n+\zeta)^m)}\right).
\end{eqnarray*}
Thereby $$\zeta_{\rho_\chi}(a,\zeta)=\frac{1}{N^a}\sum\limits_{k=0}^{N-1}\rho_\chi(T'^{-k}T^{-1})\zeta(a,\frac{k+\zeta}{N})$$ 
and $$\zeta'_{\rho_\chi}(a,\zeta)=\frac{1}{N^a}\sum\limits_{k=0}^{N-1}\rho_\chi(T^{-k}T'^{-1})\zeta_H(a,\frac{k+\zeta}{N})$$
with $\zeta_H(a,\zeta)$ the Hurwitz zeta function. According to Remark 4.2 in (\cite{DH07}) any solution $\underline{\Psi}$ of equation (\ref{Lewis2}) with $\underline{\Psi}(\zeta) =\underline{o}(\zeta^{-\min\{1,2\beta\}})$ for $\zeta\to 0$ fulfills the equation $$\underline{\Psi}(\zeta)=\zeta^{-2\beta} \sum_{n=0}^\infty (n+\zeta^{-1})^{-2\beta}\rho_\chi(T'^{-n}T^{-1})\underline{\Psi}(1+\frac{1}{n+\zeta^{-1}})$$
 and moreover $\underline{C}_{-1}^{*}=0$. But if $\underline{\Psi}(\zeta)$ fulfills also the equation (\ref{Lewis1}) then one finds
 $$\lambda \zeta^{-2\beta}P \rho_\chi(S) \underline{\Psi}(\frac{1}{\zeta})=\zeta^{-2\beta}\sum\limits_{n=0}^\infty (n+\zeta^{-1})^{-2\beta}\rho_\chi(T'^{-n}T^{-1})\underline{\Psi}(1+\frac{1}{n+\zeta^{-1}})$$ and hence
\begin{equation}\label{Gl}\lambda P \rho_\chi(S) \underline{\Psi}(\zeta+1)=\sum\limits_{n=1}^\infty (n+\zeta)^{-2\beta}\rho_\chi(T'^{-(n-1)}T^{-1})\underline{\Psi}(1+\frac{1}{n+\zeta}).\end{equation}
According to equation (\ref{psif}) $\underline{\Psi}(\zeta+1)=P\rho_\chi (T^{-1}S)P\underline{f}(\zeta)$ and hence  we get
$$\lambda \rho_\chi(ST^{-1}S) P \underline{f}(\zeta)=\sum\limits_{n=1}^\infty (n+\zeta)^{-2\beta}\rho_\chi(T'^{-(n-1)}T^{-1})P \rho_\chi(T^{-1}S)P\underline{f}(\frac{1}{\zeta+n}).$$
Inserting $T'^{-(n-1)}=ST^{(n-1)}S$ one arrives at
$$\lambda  \underline{f}(\zeta)=\sum\limits_{n=1}^\infty (n+\zeta)^{-2\beta} P \rho_\chi(S T^n)\rho_\chi(S T^{-1})P \rho_\chi(T^{-1} S) P \underline{f}(\frac{1}{\zeta+n}).$$
Since $\rho_\chi(ST^{-1})P=P\rho_\chi(ST)$ we get finally 
$$\lambda \underline{f}(\zeta)= \sum\limits_{n=1}^\infty \frac{1}{(n+\zeta)^{2\beta}}P \rho_\chi(ST^n)\underline{f}(\frac{1}{n+\zeta}).$$
Hence any solution $\underline{\Psi}$ of the Lewis equations (\ref{Lewis1}) and (\ref{Lewis2}) with the asymptotics at the cut $\zeta=0$ determines an eigenfunction $\underline{f}$ of the transfer operator $P \mathcal{L}_{\beta,\chi}^+$ with eigenvalue $\lambda=\pm 1$.

\end{proof}

\section{Automorphism  of  the Maass forms and their period functions for $\Gamma_0(N)$}
The Maassforms $u=u(z)$ of a cofinite Fuchsian group $\Gamma$ and unitary character $\chi$ are  real analytic functions $u:\mathbb{H}\to\mathbb{C}$ with
\begin{itemize}
\item $\Delta\, u(z)=\lambda \,u(z)$,
\item $u(gz)=\chi (g) \,u(z)$ for all $g \in \Gamma$,
\item $u(g_jz)=O(y^C$ as $y \to \infty$ for some constant $C\in \mathbb{R}$ and  all cusps $z_j=g_j(i\infty)$ of $\Gamma$.
\end{itemize}
The cusp forms are those forms which decay exponentially fast at the cusps. If $u\in L_2(M_\Gamma)$ we call $u$ a Maass wave form.
\begin{definition}
An element $j\in GL(2,\mathbb{Z})$ defines an automorphism $J$ of the Maass wave form $u$ for the group $\Gamma$ and character $\chi$ if  $J\,u$ with $Ju(z):=u(jz)$ is a Maass form for $\Gamma$ and character $\chi$.  
\end{definition}
Obviously $j$ defines an automorphism $J$ iff $j$ is a normalizer of the group $\Gamma$ and the character $\chi$ is invariant under $j$, that 
is $\chi (j \,g \,j^{-1}) =\chi (g) $  for all $g\in \Gamma$. Thereby $j z =\frac{az^*+b}{cz^*+d}$ if $\det g = ad-bd = -1$. We have to show  
that the function $Ju(z)=u(jz)$ has at most polynomial growth at the cusps $z_i=\tau_i(i \infty)$ of $\Gamma$, where  $\tau_i\in SL(2,\mathbb{Z)}$ . If $\det j=-1$, one has $u(j\tau_i(z))=u(j\tau_ij_{0,-}j_{0,-}(z))$ where $j_{0,-}=\begin{pmatrix}1&0\\0&-1\end{pmatrix}$. Then 
$j\tau_ij_{0,-}\in SL(2,\mathbb{Z})$ and hence $j\tau_ij_{0,-}=\gamma_i R_i$ for some $\gamma_i\in\Gamma$ and some representative $R_i$ 
of the cosets $\Gamma\setminus SL(2,\mathbb{Z})$. But $R_i=\eta\tau_{\sigma(i)}$ for some  $\eta\in\Gamma$ and some index $\sigma(i)$.  
Hence $u(j\tau_i(z))=u\left(\tau_{\sigma(i)}(-z^*)\right)$ which is at most of polynomial growth at the cusps. The same argument applies if $\det j =1$. It shows also that $Ju$ is a Maass wave form or a cusp form if $u$ is one.

\subsection{The group of automorphisms of Maass forms for $\Gamma_0(N)$ and trivial character $\chi_0$}

We restrict ourselves now to the case $ \Gamma =\Gamma_0(N)$ and assume $\chi =\chi_0$. Denote  by $\mathcal{N}_N$ the normalizer group $\{\Gamma_0(N) \, j: j \; \text{normalizer of} \; \Gamma_0(N) \; \text{in} \; GL(2,\mathbb{Z})\}$. Using results by Lehner and Newman \cite{LN64} respectively Conway and Norton \cite{CN79} we find
\begin{proposition}
For $h_N=\max \{r: r\mid 24 \,\text{and} \; r^2\mid N\}$ and $k_N:=\frac{N}{h_N}$ the normalizer group $\mathcal{N}_N$is given by  $\mathcal{N}_N=\{\Gamma_0(N)\,j_{n,\pm},\, j_{n,\pm}=\begin{pmatrix} 1&0\\n k_N&\pm 1\end{pmatrix}, 0\leq n\leq h_{N}-1\}$
\end{proposition}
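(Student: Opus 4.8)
The plan is to prove the asserted equality by two inclusions. One direction amounts to checking that each of the $2h_N$ matrices $j_{n,\pm}$, $0\le n\le h_N-1$, normalizes $\Gamma_0(N)$ and that the cosets $\Gamma_0(N)j_{n,\pm}$ are pairwise distinct; for the other direction --- that every element of $GL(2,\Z)$ normalizing $\Gamma_0(N)$ lies in one of these cosets --- I would reduce first to the determinant $+1$ case and then to an index comparison. Two arithmetic features of $h_N$ will carry everything: $h_N^2\mid N$ (so that $N\mid k_N^2$, since $k_N=N/h_N$), and $h_N\mid24$, which is exactly the statement that $(\Z/h_N\Z)^\times$ has exponent dividing $2$, i.e.\ $a^2\equiv1\pmod{h_N}$ whenever $\gcd(a,h_N)=1$.

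\emph{Each $j_{n,\pm}$ normalizes $\Gamma_0(N)$.} Since $j_{0,-}=\begin{pmatrix}1&0\\0&-1\end{pmatrix}$ conjugates $\begin{pmatrix}a&b\\cN&d\end{pmatrix}$ to $\begin{pmatrix}a&-b\\-cN&d\end{pmatrix}\in\Gamma_0(N)$, and $j_{n,-}=j_{n,+}j_{0,-}$, it suffices to handle $j_{n,+}=\begin{pmatrix}1&0\\nk_N&1\end{pmatrix}$. The one computation needed is that, for $\gamma=\begin{pmatrix}a&b\\cN&d\end{pmatrix}\in\Gamma_0(N)$, the lower-left entry of $j_{n,+}\gamma j_{n,+}^{-1}$ equals $cN+nk_N(a-d)-n^2k_N^2b$: the first summand is divisible by $N$ trivially, the third because $N\mid k_N^2$, and the middle one because $ad\equiv1\pmod N$ together with $a^2\equiv1\pmod{h_N}$ forces $a\equiv d\pmod{h_N}$, whence $N=h_Nk_N\mid nk_N(a-d)$. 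Thus $j_{n,+}\Gamma_0(N)j_{n,+}^{-1}\subseteq\Gamma_0(N)$, and the same argument applied to $j_{n,+}^{-1}=j_{-n,+}$ upgrades this to an equality. Pairwise distinctness of the $2h_N$ cosets is then immediate: the sign of the determinant separates the two families, and within a family $j_{m,\pm}^{-1}j_{n,\pm}$ equals $j_{\pm(n-m),+}$, which lies in $\Gamma_0(N)$ iff $h_N\mid n-m$.

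\emph{There are no further cosets.} If $j\in GL(2,\Z)$ normalizes $\Gamma_0(N)$ with $\det j=-1$, then $jj_{0,-}\in\SL{\Z}$ also normalizes $\Gamma_0(N)$; so it suffices to show that every $g=\begin{pmatrix}a&b\\c&d\end{pmatrix}\in\SL{\Z}$ normalizing $\Gamma_0(N)$ satisfies $k_N\mid c$, i.e.\ $g\in\Gamma_0(k_N)$. Granting that, the normalizer of $\Gamma_0(N)$ in $\SL{\Z}$ equals $\Gamma_0(k_N)$, and in fact $\Gamma_0(k_N)=\langle\Gamma_0(N),j_{1,+}\rangle$: the inclusion $\langle\Gamma_0(N),j_{1,+}\rangle\subseteq\Gamma_0(k_N)$ is clear since $k_N$ divides both $N$ and $nk_N$; $\Gamma_0(N)$ is normal in $\langle\Gamma_0(N),j_{1,+}\rangle$ by the previous paragraph, with cyclic quotient of order $h_N$ because $j_{1,+}^{h_N}\in\Gamma_0(N)$ but no smaller power is; and $[\Gamma_0(k_N):\Gamma_0(N)]=N/k_N=h_N$ by the index formula, using that $h_N^2\mid N$ makes $N$ and $k_N$ share the same set of prime divisors. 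Hence those two groups coincide, with coset representatives $j_{n,+}$, $0\le n\le h_N-1$; and since the full normalizer in $GL(2,\Z)$ is generated by $\Gamma_0(k_N)$ and $j_{0,-}$ (any determinant $-1$ normalizer being $\gamma j_{0,-}$ with $\gamma\in\Gamma_0(k_N)$, by the reduction above), its cosets modulo $\Gamma_0(N)$ are exactly the $\Gamma_0(N)j_{n,+}$ and $\Gamma_0(N)j_{n,+}j_{0,-}=\Gamma_0(N)j_{n,-}$, which is the claim. It remains to establish $k_N\mid c$. Conjugating $T=\begin{pmatrix}1&1\\0&1\end{pmatrix}$ by $g$ gives lower-left entry $-c^2$, so $N\mid c^2$; conjugating a general element $\begin{pmatrix}\alpha&\beta\\ \ast&\delta\end{pmatrix}\in\Gamma_0(N)$ by $g$ and using $N\mid c^2$ then reduces the requirement $g\Gamma_0(N)g^{-1}\subseteq\Gamma_0(N)$ to $cd(\alpha-\delta)\equiv0\pmod N$ for all units $\alpha$ modulo $N$ (with $\delta\equiv\alpha^{-1}$), equivalently to $(\Z/M\Z)^\times$ having exponent $\le2$ for $M:=N/\gcd(N,cd)$, i.e.\ $M\mid24$. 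Comparing the two divisibilities prime by prime --- $N\mid c^2$ gives $v_p(c)\ge\lceil v_p(N)/2\rceil$, in particular $p\mid c$, hence $v_p(d)=0$ since $\gcd(c,d)=1$, so $M\mid24$ reads $v_p(c)\ge v_p(N)-v_p(24)$ --- one obtains $v_p(c)\ge v_p(N)-\min(\lfloor v_p(N)/2\rfloor,v_p(24))=v_p(k_N)$ for every prime $p\mid N$, that is $k_N\mid c$.

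I expect the one genuinely delicate point to be this last step: the divisibility $k_N\mid c$ has to be extracted from two \emph{different} pieces of data --- conjugating the single parabolic $T$ on one hand, and testing $g$ against all of $\Gamma_0(N)$ on the other --- and it is exactly the interplay of these that forces the definition of $h_N$ via $h_N\mid24$. The remaining ingredients --- the determinant bookkeeping, the identification $\Gamma_0(k_N)=\langle\Gamma_0(N),j_{1,+}\rangle$, and the index computation --- are routine. One could also sidestep this analysis by invoking the explicit description of the normalizer of $\Gamma_0(N)$ in $\PSL{\R}$ due to Conway--Norton \cite{CN79}, which rests on Lehner--Newman \cite{LN64}, and reading off its integral determinant-one part.
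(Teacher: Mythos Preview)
Your proof is correct and, at its core, follows the same line as the paper's: the key arithmetic input in both is that the divisors $k$ of $24$ are precisely those integers for which $ad\equiv1\pmod k$ forces $a\equiv d\pmod k$, i.e.\ $(\Z/k\Z)^\times$ has exponent at most $2$. The paper simply states this and then \emph{cites} Lehner--Newman \cite{LN64} for the conclusion that the normalizer of $\Gamma_0(N)$ in $\SL{\Z}$ is $\Gamma_0(N/\nu)$ with $\nu=h_N$, after which the coset count and the passage to $GL(2,\Z)$ via $j_{0,-}$ are exactly as you do them.

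What you supply beyond the paper is a self-contained derivation of that cited result: your prime-by-prime extraction of $k_N\mid c$ from the two conditions $N\mid c^2$ (conjugating $T$) and $N/\gcd(N,cd)\mid24$ (conjugating a generic element) is precisely the content of the Lehner--Newman computation, made explicit. So your argument is not a different route so much as an unpacking of the reference the paper invokes; your closing remark that one could alternatively appeal to \cite{LN64}/\cite{CN79} is in fact exactly what the paper does.
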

\begin{proof}
Using the fact that the divisors $k$ of $24$ are exactly the numbers for which $a \cdot d=1\mod k$ implies $a=d \mod k$ one shows that the 
normalizer group of $\Gamma_0(N)$ in $ SL(2,\mathbb{Z})$ is $\Gamma_0(N)\setminus \Gamma_0(\frac{N}{\nu})$ \cite{LN64} with 
$\nu=2^{\min\{3,[\frac{\epsilon_2}{2}]\}}\cdot 3^{\min\{1,[\frac{\epsilon_3}{2}]\}}$ and $\epsilon_2=\max\{l: 2^l\mid N\}$ respectively 
$\epsilon_3=\max\{l: 3^l\mid N\}$. But obviously $\nu =h_N$ and $[\Gamma_0(k_N):\Gamma_0(N)]=h_N$ and hence 
$\mathcal{N}_N=\Gamma_0(N)\setminus \left(\Gamma_0(k_N)\bigcup \Gamma_0(k_N) j_{0,-}\right)$. Since $ j_{n,\pm} \not= j_{m,\pm}\mod \Gamma_0(N) $ for 
$n\not=m$, this group has just the $2 h_N$ elements $\Gamma_0(N) j_{n,\pm},\, 0\leq n \leq h_N-1$. The normalizer group $\mathcal{N}_N$
 is therefore generated by the $h_N$ generators $\{\Gamma_0(N)  j_{n,-},\,0\leq n\leq h_N-1\}$.
\end{proof}

\subsection{ The period functions of $\Gamma_0(N)$ and character $\chi$}
For $u$ a Maass form with  $\Delta u =\beta(1-\beta) u$ and $\Gamma_0(N)\setminus SL(2,\mathbb{Z})=\{\Gamma_0(N) R_i,\, 1\leq i\leq \mu_N\}$ its vector valued period function $\underline{u}$ is defined by
\begin{equation}\label{period}
\underline{u}=\left(u_i(z)\right)_{1\leq i\leq \mu_N} \;\text{where} \; u_i(z)=u(R_i z)
\end{equation}
Then one has as shown for instance in \cite{Mu06}:
\begin{itemize}
\item $\underline{u}(gz)=\rho_\chi(g) \underline{u}(z)$ for all $g\in SL(2,\mathbb{Z})$ and $\rho_\chi$ the  representation of $SL(2,\mathbb{Z})$ induced from the character $\chi$ on $\Gamma_0(N)$
\item $\Delta u_i(z)=\beta (1-\beta) u_i(z),\; 1\leq i\leq \mu_N$.
\end{itemize}
Given next two eigenfunctions $u=u(z)$ and $v=v(z)$ of the hyperbolic Laplacian with identical eigenvalue $\lambda=\beta(1-\beta)$,  one knows \cite{LZ01} that the $1$- form $\eta = \eta (u,v)$ with
$$\eta(u,v)(z):= {v(z) \partial_y u(z)-u(z)\partial_y v(z)}dx+[u(z)\partial_xv(z)-v(z)\partial_xu(z)]dy
$$
is closed. If $u=u(z)$ is a Maass wave form for $\Gamma_0(N)$ with eigenvalue $\lambda=\beta(1-\beta)$ and $R_\zeta(z)=\frac{y}{((\zeta-x)^2+y^2)^2}$ denotes the Poisson kernel, the vector valued period function $
\underline{\psi}=(\psi_j(\zeta))_{1\leq j\leq \mu_N}$ is defined as
\begin{equation}\psi_j(\zeta):= \int\limits_0^\infty \eta(u_j,R_\zeta^\beta)(z).\end{equation}
The following result has been shown for trivial character $\chi_0$ by M\"uhlenbruch in \cite{Mu06}. His proof can be extended however immediately to the case of a nontrivial character $\chi$.
\begin{proposition}
The period function $\underline{\psi}=\underline{\psi}(\zeta)$ of a Maass wave form $u=u(z)$ for $\Gamma_0(N)$ and unitary character $\chi$ is holomorphic in the cut $\zeta$-plane $\mathbb{C}\setminus (-\infty,0]$ and fullfills there the Lewis functional equation (\ref{Lewis2})
$$ \underline{\psi}(\zeta)-\rho_\chi(T^{-1})\underline{\psi}(\zeta +1)- (\zeta+1)^{-2\beta} \rho_\chi(T'^{-1})   \underline{\psi}(\frac{\zeta}{\zeta +1})=\underline{0},
$$ 
where $\rho_\chi$ denotes the representation of $SL(2,\mathbb{Z})$ induced from the character $\chi$ of $\Gamma_0(N)$.
\end{proposition}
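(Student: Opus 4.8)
The plan is to follow the argument of M\"uhlenbruch \cite{Mu06}, which itself adapts Lewis and Zagier \cite{LZ01}, and to check that the only new feature — a unitary character $\chi$ — is carried along for free. The whole proof rests on three facts about the integrand. First, $R_\zeta^\beta$ is a $\Delta$-eigenfunction with eigenvalue $\beta(1-\beta)$, the same eigenvalue as every component $u_j=u\circ R_j$ of the period vector, so the Green's one-form $\eta(u_j,R_\zeta^\beta)$ is closed on $\mathbb{H}$ away from the singular locus of $R_\zeta^\beta$. Second, $\eta$ is $\mathrm{SL}(2,\mathbb{R})$-invariant: $g^{\star}\eta(a,b)=\eta(a\circ g,b\circ g)$ for every $g\in\mathrm{SL}(2,\mathbb{R})$. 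Third, the kernel transforms as $R_\zeta^\beta\circ g=(c\,g^{-1}\zeta+d)^{2\beta}\,R_{g^{-1}\zeta}^{\beta}$ for $g=\begin{pmatrix}a&b\\c&d\end{pmatrix}$ (with the appropriate branch), while the period vector of Maass forms satisfies $\underline{u}(gz)=\rho_\chi(g)\underline{u}(z)$, which uses only $u(\gamma w)=\chi(\gamma)u(w)$ for $\gamma\in\Gamma_0(N)$.

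First I would prove holomorphy of each $\psi_j$ on $\mathbb{C}\setminus(-\infty,0]$. Along the geodesic from $0$ to $i\infty$ (the imaginary axis) one has, locally uniformly in $\zeta$, $R_\zeta^\beta(iy)\sim y^{\beta}$ as $y\downarrow 0$ and $R_\zeta^\beta(iy)\sim y^{-\beta}$ as $y\to\infty$, while $u_j(iy)=u(R_j\,iy)$ is controlled at the cusps $R_j(0)$ and $R_j(i\infty)$ because $u$ is a Maass wave form; hence the defining integral converges locally uniformly and is holomorphic wherever the integrand is. For those $\zeta$ for which the straight contour meets the singular locus or the branch cut of $R_\zeta^\beta$ I would deform the contour as in \cite{LZ01}, closedness of $\eta(u_j,R_\zeta^\beta)$ ensuring the value is unchanged, and check that such a deformation exists exactly when $\zeta\notin(-\infty,0]$.

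Next I would derive the three-term equation. The geodesics joining the cusps $-1$, $0$, $i\infty$ bound an ideal triangle, so the geodesic from $0$ to $i\infty$ is homologous to the geodesic from $0$ to $-1$ followed by the geodesic from $-1$ to $i\infty$; the former is the image of the geodesic from $0$ to $i\infty$ under $(T')^{-1}$, with $T'=ST^{-1}S$, and the latter its image under $T^{-1}$. Splitting $\psi_j(\zeta)=\int_{0}^{i\infty}\eta(u_j,R_\zeta^\beta)$ accordingly and transporting each half back to the standard geodesic by the three rules above, one computes that the $T^{-1}$-half contributes $\rho_\chi(T^{-1})\underline{\psi}(\zeta+1)$ (automorphy factor $1$) and the $(T')^{-1}$-half contributes $(\zeta+1)^{-2\beta}\rho_\chi(T'^{-1})\underline{\psi}(\tfrac{\zeta}{\zeta+1})$, so that
\[
\underline{\psi}(\zeta)-\rho_\chi(T^{-1})\underline{\psi}(\zeta+1)-(\zeta+1)^{-2\beta}\rho_\chi(T'^{-1})\underline{\psi}\!\left(\tfrac{\zeta}{\zeta+1}\right)=\underline{0},
\]
first for $\zeta>0$ and then on all of the cut plane by analytic continuation, since every term is holomorphic there.

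The delicate part, and where I expect the real work lies, is the bookkeeping of the branches of the automorphy factors $(c\zeta+d)^{2\beta}$ (recall $2\beta\notin 2\mathbb{Z}$) through the contour deformations and the substitutions, together with verifying that the ideal-triangle homology is compatible with the singular locus of $R_\zeta^\beta$ for $\zeta$ in the cut plane — precisely the points that M\"uhlenbruch, following Lewis and Zagier, works out in detail. Passing from $\chi_0$ to a general unitary $\chi$ costs nothing: $\chi$ enters only through the entries $\rho_\chi(g)_{ij}=\delta_{\Gamma_0(N)}(R_igR_j^{-1})\chi(R_igR_j^{-1})$ of the induced representation, it is a unitary scalar that commutes with every analytic manipulation of the $u_j$ and $\psi_j$, and the transformation law $\underline{u}(gz)=\rho_\chi(g)\underline{u}(z)$ used throughout relies only on $u(\gamma w)=\chi(\gamma)u(w)$; so M\"uhlenbruch's argument applies verbatim.
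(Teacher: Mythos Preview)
Your proposal is correct and follows exactly the approach the paper indicates: the paper does not give a proof of this Proposition at all, but simply states that the result was shown for trivial character by M\"uhlenbruch in \cite{Mu06} and that ``His proof can be extended however immediately to the case of a nontrivial character $\chi$.'' You have spelled out precisely that extension --- the Lewis--Zagier/M\"uhlenbruch argument via the closed Green's form $\eta$, the ideal-triangle contour decomposition, and the observation that the unitary scalar $\chi$ enters only through $\rho_\chi$ and commutes with every analytic step --- so your write-up is in fact more detailed than what the paper provides.
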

On the other hand it follows from the work of Deitmar and Hilgert in \cite{DH07} that the solutions of the above equation 
holomorphic in the cut $\zeta$-plane with certain asymptotic behaviour at the cut $0$ and at $\infty$ are in one-to-one 
correspondence with the Maass wave forms. Their paper treats only the trivial character but it can be extended  also to the 
case of nontrivial character $\chi$. Since the function $\underline{\Psi}(\zeta)=P\rho_\chi(T^{-1}S)P \underline{f}
(\zeta-1)$ with $\underline{f}$ an eigenfunction of the operator $P \mathcal{L}_{\beta,\chi}^+$ with eigenvalue 
$\lambda=\pm 1$ is such a solution of equation (\ref{Lewis2}),  these eigenfunctions are in one-to-one correspondence 
with the Maass wave forms.  As in the case of the full modular group $SL(2,\mathbb{Z})$ treated in \cite{CM98} 
respectively in \cite{LZ01} one can extend this result to arbitrary Maass forms, that is also to the real analytic Eisenstein 
series for $\Gamma_0(N)$ and unitary character $\chi$
\subsection{Automorphisms of the period functions}
We have seen that the group of automorphisms in $GL(2,\mathbb{Z})$ of the  Maass forms $u$ of $\Gamma_0(N)$ and 
trivial character $\chi_0$ is generated by the matrices $j_{n,-}= \begin{pmatrix}1&0\\n k_N&-1\end{pmatrix},\, 0 \leq 
n\leq \mu_N-1$. Denote by $J_{n,-}u$ the Maass form $J_{n,-}u(z):=u(j_{n,-}z)$ and by $J_{n,-}\underline{\psi}$ its period 
function. Then one shows
\begin{theorem}
The period function $J_{n,-}\underline{\psi}=(J_{n,-}\psi_j(\zeta))_{1\leq j\leq \mu_N}$ is given by
\begin{equation}
J_{n,-}\psi_j(\zeta)=\zeta^{-2\beta}\psi_{\lambda_{n_-}\circ\sigma\circ\delta(j)}(\frac{1}{\zeta}),
\end{equation}
where the permutations $\lambda_{n_-},\sigma\;\text{and}\;\delta$ are determined through  the 
coset representatives $R_j$ of  $\Gamma_0(N)\setminus SL(2,\mathbb{Z})$ as follows: $$j_{n,+}R_j=\theta_j R_{\lambda_{n,-}(j)},\; j_{0,-}R_j j_{0,-}=\gamma_j R_{\sigma(j)} \;\text{and}\; R_j S= \eta_j R_{\delta(j)}$$ with $\theta_j, \gamma_j, \eta_j\in \Gamma_0(N)$ for $1\leq j\leq \mu_N$
\end{theorem}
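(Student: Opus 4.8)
The plan is to compute $J_{n,-}\psi_j$ directly from its integral definition and track how the matrix $j_{n,-}$ interacts with the coset representatives. First I would recall that $J_{n,-}u(z) = u(j_{n,-}z)$, so its period function is $J_{n,-}\psi_j(\zeta) = \int_0^\infty \eta\bigl((J_{n,-}u)(R_j\,\cdot\,), R_\zeta^\beta\bigr)(z) = \int_0^\infty \eta\bigl(u(j_{n,-}R_j\,\cdot\,), R_\zeta^\beta\bigr)(z)$. The key algebraic identity is the decomposition $j_{n,+}R_j = \theta_j R_{\lambda_{n,-}(j)}$ given in the statement; since $j_{n,-} = j_{n,+}\,j_{0,-}$ (as $j_{n,-} = \begin{pmatrix}1&0\\nk_N&-1\end{pmatrix} = \begin{pmatrix}1&0\\nk_N&1\end{pmatrix}\begin{pmatrix}1&0\\0&-1\end{pmatrix}$), I can write $j_{n,-}R_j$ in terms of the action of $j_{0,-}$ conjugation and the permutation $\sigma$, together with the $S$-twist $R_j S = \eta_j R_{\delta(j)}$ which encodes how $j_{0,-}$ acting as $z\mapsto -z^*$ relates to the involution $z\mapsto 1/z$ used in the Lewis setup.

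The heart of the computation is to show $j_{n,-}R_j = \gamma\, R_{\lambda_{n,-}\circ\sigma\circ\delta(j)}\,S$ for some $\gamma\in\Gamma_0(N)$ (up to the appropriate sign convention coming from $j_{0,-}$). Concretely I would chain the three given relations: apply $R_j S = \eta_j R_{\delta(j)}$ to move the $S$ past $R_j$, then apply $j_{0,-}R_{\delta(j)}j_{0,-} = \gamma_{\delta(j)} R_{\sigma\delta(j)}$ to handle the orientation-reversing part, then apply $j_{n,+}R_{\sigma\delta(j)} = \theta_{\sigma\delta(j)} R_{\lambda_{n,-}\sigma\delta(j)}$ to absorb the $j_{n,+}$ factor. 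At each stage the group element picked up lies in $\Gamma_0(N)$, so by the transformation rule $u(\gamma z) = u(z)$ for $\gamma\in\Gamma_0(N)$ (trivial character), those elements disappear from the argument of $u$. What remains is that $J_{n,-}\psi_j(\zeta)$ equals the period function of $u$ in the slot $\lambda_{n,-}\sigma\delta(j)$ evaluated after an $S$-transformation, and the effect of composing with $S$ (i.e. $z\mapsto 1/z$) on the period integral is precisely the factor $\zeta^{-2\beta}$ together with the substitution $\zeta\mapsto 1/\zeta$ — this is a standard property of the Poisson kernel $R_\zeta^\beta$ and the closed $1$-form $\eta$, essentially the same computation that produces equation (\ref{Lewis1}). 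Putting these together yields $J_{n,-}\psi_j(\zeta) = \zeta^{-2\beta}\psi_{\lambda_{n,-}\circ\sigma\circ\delta(j)}(1/\zeta)$.

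I expect the main obstacle to be bookkeeping of the $j_{0,-}$-conjugation and the precise sign/orientation conventions: because $j_{n,-}$ has determinant $-1$, its Möbius action is $z\mapsto (az^*+b)/(cz^*+d)$ rather than the holomorphic one, and one must check that the complex conjugations introduced when pushing $j_{0,-}$ through the $R_j$'s cancel correctly against the conjugation already hidden in $j_{n,-}z = u(j_{n,-}z)$, leaving a genuinely holomorphic period function. A careful way to manage this is to always write $j_{n,-} = j_{n,+} j_{0,-}$ and handle $j_{0,-}$ once and for all via the third relation $R_j S = \eta_j R_{\delta(j)}$, noting that $j_{0,-}$ acts on the upper half plane the same way as $S$ composed with $z\mapsto -\bar z$ up to a $\Gamma_0(N)$-element, which is exactly what the $\delta$ permutation records. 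The remaining verification — that each of $\theta_j,\gamma_j,\eta_j$ genuinely lies in $\Gamma_0(N)$ — follows from $j_{n,+}\in\mathcal{N}_N$ normalizing $\Gamma_0(N)$, from $j_{0,-}$ normalizing it (Proposition on $\mathcal{N}_N$), and from $S\in SL(2,\mathbb{Z})$ permuting the cosets; these are routine once the decomposition is set up, so the substantive content is the identification of the composite permutation as $\lambda_{n,-}\circ\sigma\circ\delta$ and the transformation law of the period integral under $S$.
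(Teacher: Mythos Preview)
There is a genuine gap in your approach, and it is precisely at the step you flag as ``bookkeeping''. After chaining the coset relations you arrive at $(J_{n,-}u)_j(z)=u_{\lambda_{n,-}\sigma(j)}(-\bar z)$ (or equivalently $u_{\lambda_{n,-}\sigma\delta(j)}(1/\bar z)$), and the residual antiholomorphic argument does \emph{not} disappear. Your claim that ``the effect of composing with $S$ on the period integral is precisely the factor $\zeta^{-2\beta}$ together with $\zeta\mapsto 1/\zeta$'' is not a general property of the closed form $\eta$ and the Poisson kernel: it holds only for eigenfunctions having a definite parity under $z\mapsto -\bar z$. On the imaginary axis the integrand of $\int_0^{i\infty}\eta(w,R_\zeta^\beta)$ involves both $w(iy)$ and $\partial_x w(iy)$, and replacing $w(z)$ by $w(-\bar z)$ flips the sign of the $\partial_x$--term only. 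Thus the period integral of $u_k(-\bar z)$ is neither $\psi_k(\zeta)$ nor $\zeta^{-2\beta}\psi_k(1/\zeta)$, and your direct chain stalls here.

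The paper's proof supplies exactly the missing idea. It forms the combinations $v_{\pm,j}(z)=J_{n,+}u_j(z)\pm J_{n,+}u_j(-\bar z)$, which by construction satisfy $v_{\pm,j}(-\bar z)=\pm v_{\pm,j}(z)$. For such even/odd eigenfunctions the Lewis--Zagier integral formulas apply, and together with the relation $v_{\pm,j}(Sz)=v_{\pm,\delta(j)}(z)$ one obtains $\psi'_{\pm,j}(\zeta)=\pm\zeta^{-2\beta}\psi'_{\pm,\delta(j)}(1/\zeta)$. Since $v_{\pm,j}=u_{\lambda_{n,-}(j)}\pm J_{n,-}u_{\sigma(j)}$, this gives two linear relations between $\psi_{\lambda_{n,-}(j)}$ and $J_{n,-}\psi_{\sigma(j)}$; adding them isolates the desired identity. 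The parity trick is not cosmetic --- it is what makes the $\zeta^{-2\beta}$ transformation law available at all.
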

\begin{proof}
For $u=u(z)$ a Maass form for $\Gamma_0(N)$ and trivial character $\chi_0$ and $\underline{u}=\underline{u}(z)$ its vector valued Maass form consider the Maass forms $J_{n,\pm}u(z)=u(j_{n,\pm}z)$ respectively  $J_{n,\pm}\underline{u}(z)=(J_{n,\pm}u_j(z))_{1\leq j\leq \mu_N}$ with $J_{n,\pm}u_j(z)=u(j_{n,\pm}R_jz)$. Since $j_{n,+}R_j=\theta_j R_{\lambda_{n.-}(j)}$ for some uniquely defined $\theta_j\in \Gamma_0(N)$ and permutation $\lambda_{n,-}$ of $\{1,2,\ldots,\mu_N\}$ one gets for $J_{n,+}u_j$
\begin{equation}\label{Eq2}
J_{n,+}u_j(z) = u(R_{\lambda_{n,-}(j)}z)=u_{\lambda_{n,-}(j)}(z).
\end{equation}
For $J_{n,+}u_j(-z^*)=u(j_{n,+}R_j(-z^*)=u(j_{n,+}R_j j_{0,-}z)$ on the other hand one finds $$J_{n,+}u_j(-z^*)=u(j_{n,-}j_{0,-}R_j j_{0,-}z)=u(j_{n,-}R_{\sigma(j)}z)$$ since $j_{0,-}R_j j_{0,-}= \gamma_j R_{\sigma(j)}$ for some unique $\gamma_j\in \Gamma_0(N)$ and permutation $\sigma$ of $\{1,2,\ldots,\mu_N\}$. Hence
\begin{equation}\label{Eq3}
J_{n,+}u_j(-z^*)=J_{n,-}u_{\sigma(j)}(z).
\end{equation}
Consider next $J_{n,+}u_j(Sz)= J_{n,+}u(R_jSz)$. Since $R_jS=\eta_j R_{\delta(j)}$ for  unique $\eta_j\in \Gamma_0(N)$ and permutation $\delta $ of $\{1,2,\ldots,\mu_N\}$, one has $$J_{n,+}u_j(Sz)=J_{n,+}u(R_{\delta(j)}z)= J_{n,+}u_{\delta(j)}(z).$$ Hence by equation (\ref{Eq2})
\begin{equation}\label{Eq4}
J_{n,+}u_j(Sz)=u_{\lambda_{n,-}\circ \delta(j)}(z).
\end{equation}
On the other hand one gets  for $J_{n,+}u_j(S(-z^*))=J_{n,+}u_j(-Sz^*)$ by using equation (\ref{Eq3}): 
$$J_{n,+}u_j(S(-z^*))= J_{n,-}u_{\sigma(j)}(Sz)= u(j_{n,-}R_{\sigma(j)}Sz),$$ and therefore
$$J_{n,+}u_j(-
Sz^*)=u(j_{n,-}\eta_{\sigma(j)}R_{\delta\circ\sigma(j)}(z))=J_{n,-}u_{\delta\circ\sigma(j)}(z).$$ 
But $\sigma\circ\delta=\delta\circ \sigma$ and  therefore
\begin{equation}\label{Eq5}
J_{n,+}u_j(S(-z^*))=J_{n,-}u_{\sigma\circ\delta(j)}(z).
\end{equation}
Define next $$v_{\pm,j}(z):= J_{n,+}u_j(z)\pm J_{n,+}u_j(-z^*).$$ Then by equations (\ref{Eq2}) and (\ref{Eq3}) one has
$$v_{\pm,j}(z)=u_{\lambda(j)}(z)\pm J_{n,-}u_{\sigma(j)}(z)$$ and hence, if $\Delta u(z)=\beta (1-\beta) u(z)$, 
\begin{equation}
\Delta v_{\pm,j}(z)=\beta (1-\beta) v_{\pm,j}(z) ,
\end{equation}
 respectively
\begin{equation}
v_{\pm,j}(-z^*)= \pm v_{\pm,j}(z)
\end{equation}

Equations (\ref{Eq4}) and (\ref{Eq5}) on the other hand show
\begin{equation}\label{v}
v_{\pm,j}(Sz)= v_{\pm,\delta(j)}(z).
\end{equation}
Set $\underline{\psi}'_\pm (\zeta):= \int\limits_0^{i\infty}\eta(\underline{v}_\pm,R_\zeta^\beta)(z)$. Then, since $v_{\pm,j}(-z^*)= \pm v_{\pm,j}(z)$ one finds \cite{LZ01}
\begin{equation}
\psi'_{+,j}(\zeta)=2\beta\int\limits_0^\infty \frac{t^\beta v_{+,j}(it)}{(\zeta^2+t^2)^{\beta+1}} \,dt,
\end{equation}
respectively
\begin{equation}
\psi'_{-,j}(\zeta)=-\int\limits_0^\infty \frac{t^\beta \partial_    x  v_{-,j}(it)}{(\zeta^2+t^2)^{\beta}} \,dt.
\end{equation}
Using next the identity (\ref{v}) one easily shows
\begin{equation}
\psi'_{\pm,j}(\zeta)=\pm \zeta^{-2 \beta} \psi'_{\pm,\delta(j)}(\frac{1}{\zeta}).
\end{equation}
But  $v_{\pm,j}(z)=u_{\lambda(j)}(z)\pm J_{n,-}u_{\sigma(j)}(z)$ and hence 
\begin{equation*}
\psi'_{\pm,j}(\zeta)=\psi_{\lambda_{n,-}(j)}(\zeta)\pm J_{n,-}\psi_{\sigma(j)}(\zeta).
\end{equation*}
Therefore 
\begin{equation}
\psi_{\lambda_{n,-}(j)}(\zeta)\pm J_{n,-}\psi_{\sigma(j)}(\zeta)=\pm \zeta^{-2 \beta}\left( \psi_{\lambda_{n,-}\circ\delta(j)}(\frac{1}{\zeta})\pm J_{n,-}\psi_{\sigma\circ\delta(j)}(\frac{1}{\zeta})\right)
\end{equation}

Adding these two equations leads finally to
\begin{equation}
\psi_{\lambda_{n,-}(j)}(\zeta)=\zeta^{-2\beta} J_{n,-}\psi_{\sigma\circ\delta(j)}(\frac{1}{\zeta}),
\end{equation}
and therefore to the equation
\begin{equation}
J_{n,-}\psi_j(\zeta)=\zeta^{-2\beta} \psi_{\lambda_{n,-}\circ\sigma\circ\delta(j)}(\frac{1}{\zeta}),
\end{equation}
which was to be proven.
\end{proof}
\begin{remark}
As can be seen from their action on the coset representatives $R_j$ the permutation $\delta$ commutes with the permutations 
$\lambda_{n,-}$ and $\sigma$. Furthermore one has $\sigma^2=\delta^2=(\lambda_{n,-}\circ\sigma)^2=id$ where $id$ 
denotes the identy permutation. This shows also that  the automorphisms $J_{n,-}$ are involutions both of the Maass 
forms and the period functions, a special case of these involutions for all groups $\Gamma_0(N)$ being $J_{0,-}u(z)=u(-
z^*)$.
\end{remark}
Denote by $Q_{n,-}, \, 0\leq n\leq h_N-1,$ the $\mu_N\times\mu_N$ permutation matrix corresponding to the permutation 
$\lambda_{n,-}\circ\sigma\circ\delta$. Then the following Theorem holds:
\begin{theorem}\label{Symm}
The permutation matrices $P_{n,-}:= \rho_{\chi_0}(S) Q_{n,-},\, 0\leq n\leq h_N-1,$ define  symmetries 
$\tilde{P}_{n,-}=\begin{pmatrix}0&P_{n,-}\\P_{n,-}&0\end{pmatrix}$ for the transfer operator ${\bf{L}}_{\beta,
\chi_0}=\begin{pmatrix}0&\mathcal{L}_{\beta,\chi_0}^+\\\mathcal{L}_{\beta,\chi_0}^+&0\end{pmatrix}$ for 
$\Gamma_0(N)$ and trivial character $\chi_0\equiv 1$ with $P_{n,-}^2= id_{\mu_N}$ and $P_{n,-}\rho_{\chi_0}(S)=\rho_{\chi_0}(S)P_{n,-}$ respectively $P_{n,-}\rho_{\chi_0}(T)=\rho_{\chi_0}(T^{-1})P_{n,-}$ and therefore $P_{n,-}\mathcal{L}_{\beta,\chi_0}^+=\mathcal{L}_{\beta,\chi_0}^+ P_{n,-}$. The permutation matrix $P_{n,-}$ is determined by the permutation 
$\lambda_{n,-}\circ\sigma$ and hence by the coset representatives $j_{n,-}R_j j_{0,-}$.
\end{theorem}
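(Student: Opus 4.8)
The plan is to reduce the three asserted matrix identities to identities between permutations of $\{1,\dots,\mu_N\}$ and to read these off from the action of the relevant elements of $GL(2,\Z)$ on the coset representatives $R_j$. Since $\chi_0\equiv1$, for every $g\in SL(2,\Z)$ the matrix $\rho_{\chi_0}(g)$ defined in (\ref{rep}) is the permutation matrix of the permutation $\pi_g$ characterised by $R_jg\in\Gamma_0(N)R_{\pi_g(j)}$; it acts by $(\rho_{\chi_0}(g)\underline f)_j=f_{\pi_g(j)}$, the assignment $g\mapsto\pi_g$ is an anti-homomorphism, $\pi_S=\delta$, and $\pi_{T^{-1}}=\tau^{-1}$ where I abbreviate $\tau:=\pi_T$. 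I shall use freely the facts collected in the Remark following the previous Theorem: $\delta$ commutes with $\lambda_{n,-}$ and with $\sigma$, and $\sigma^2=\delta^2=(\lambda_{n,-}\circ\sigma)^2=\id$.

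First I would identify the permutation underlying $P_{n,-}$. As $Q_{n,-}$ acts by $(Q_{n,-}\underline f)_j=f_{q(j)}$ with $q:=\lambda_{n,-}\circ\sigma\circ\delta$ and $\rho_{\chi_0}(S)$ acts by $\delta$, one computes $(P_{n,-}\underline f)_j=(Q_{n,-}\underline f)_{\delta(j)}=f_{q(\delta(j))}$, and $q\circ\delta=\lambda_{n,-}\circ\sigma\circ\delta\circ\delta=\lambda_{n,-}\circ\sigma$ since $\delta^2=\id$. Hence $P_{n,-}$ is the permutation matrix of $\kappa:=\lambda_{n,-}\circ\sigma$. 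From $(\lambda_{n,-}\circ\sigma)^2=\id$ this gives $P_{n,-}^2=\id_{\mu_N}$ at once, and since $\delta$ commutes with both $\lambda_{n,-}$ and $\sigma$ it commutes with $\kappa$, i.e. $P_{n,-}\rho_{\chi_0}(S)=\rho_{\chi_0}(S)P_{n,-}$. In parallel, to obtain the last assertion of the Theorem I would read $\kappa$ off the representatives $j_{n,-}R_jj_{0,-}$: writing $j_{n,-}=j_{n,+}j_{0,-}$ and using $j_{0,-}R_jj_{0,-}=\gamma_jR_{\sigma(j)}$, $j_{n,+}R_l=\theta_lR_{\lambda_{n,-}(l)}$ with $\gamma_j,\theta_l\in\Gamma_0(N)$, together with the fact that $j_{n,+}$ normalises $\Gamma_0(N)$ (so $j_{n,+}\gamma_j=(j_{n,+}\gamma_jj_{n,+}^{-1})\,j_{n,+}$ with $j_{n,+}\gamma_jj_{n,+}^{-1}\in\Gamma_0(N)$), one finds $j_{n,-}R_jj_{0,-}\in\Gamma_0(N)R_{\lambda_{n,-}(\sigma(j))}$, which is precisely the statement that $\kappa$ is the permutation determined by these representatives.

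The one relation requiring real work is $P_{n,-}\rho_{\chi_0}(T)=\rho_{\chi_0}(T^{-1})P_{n,-}$, which in permutation form reads $\tau\circ\kappa=\kappa\circ\tau^{-1}$. The key — and essentially the only non-bookkeeping — input is the $GL(2,\Z)$ identity $j_{0,-}T=T^{-1}j_{0,-}$ (conjugation by $\mathrm{diag}(1,-1)$ inverts $T$), which is what produces the flip $T\mapsto T^{-1}$. I would evaluate $j_{n,-}R_jj_{0,-}T$ in two ways:
\[
j_{n,-}R_jj_{0,-}T=(j_{n,-}R_jj_{0,-})T\in\Gamma_0(N)R_{\tau(\kappa(j))},\qquad j_{n,-}R_jj_{0,-}T=j_{n,-}R_jT^{-1}j_{0,-}\in\Gamma_0(N)R_{\kappa(\tau^{-1}(j))},
\]
the first by the previous paragraph together with $R_{\kappa(j)}T\in\Gamma_0(N)R_{\tau(\kappa(j))}$, the second by $R_jT^{-1}\in\Gamma_0(N)R_{\tau^{-1}(j)}$ and pushing the resulting element of $\Gamma_0(N)$ past $j_{n,-}$ (again the normaliser property) before applying the identity $j_{n,-}R_lj_{0,-}\in\Gamma_0(N)R_{\kappa(l)}$. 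Since distinct $R_i$ lie in distinct cosets, $\tau(\kappa(j))=\kappa(\tau^{-1}(j))$ for all $j$, i.e. $\tau\circ\kappa=\kappa\circ\tau^{-1}$; translating back through $(\rho_{\chi_0}(T)\underline f)_j=f_{\tau(j)}$ and $(\rho_{\chi_0}(T^{-1})\underline f)_j=f_{\tau^{-1}(j)}$ yields the claimed matrix identity. I expect keeping the composition order of $\tau$, $\kappa$, $\delta$ and of the various $\Gamma_0(N)$-factors straight to be the only delicate point.

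Finally I would assemble the conclusion exactly as in the discussion preceding Theorem~\ref{symmetry}: combining $P_{n,-}\rho_{\chi_0}(S)=\rho_{\chi_0}(S)P_{n,-}$ with the iterated relation $P_{n,-}\rho_{\chi_0}(T)=\rho_{\chi_0}(T^{-1})P_{n,-}$ gives $P_{n,-}\rho_{\chi_0}(ST^m)=\rho_{\chi_0}(ST^{-m})P_{n,-}$ for every $m\ge1$, and substituting this into the defining series (\ref{transfer}) for $\mathcal{L}_{\beta,\chi_0}^{\pm}$ yields $P_{n,-}\mathcal{L}_{\beta,\chi_0}^+=\mathcal{L}_{\beta,\chi_0}^-P_{n,-}$ first for $\Re\beta>\frac{1}{2}$ and then, by meromorphic continuation, for all $\beta$. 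Since $P_{n,-}^2=\id_{\mu_N}$ this is equivalent to $\mathcal{L}_{\beta,\chi_0}^+P_{n,-}=P_{n,-}\mathcal{L}_{\beta,\chi_0}^-$; in either form it says precisely that $\tilde P_{n,-}=\begin{pmatrix}0&P_{n,-}\\P_{n,-}&0\end{pmatrix}$ commutes with the transfer operator ${\bf{L}}_{\beta,\chi_0}$, i.e. that $\tilde P_{n,-}$ is a symmetry.
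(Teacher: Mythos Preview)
Your proof is correct and follows essentially the same approach as the paper: both arguments identify $P_{n,-}$ with the permutation $\lambda_{n,-}\circ\sigma$ coming from the coset representatives $j_{n,-}R_jj_{0,-}$, and both deduce the $S$- and $T$-relations from the $GL(2,\Z)$ identities $j_{0,-}S=Sj_{0,-}$ and $Tj_{0,-}=j_{0,-}T^{-1}$. The paper's proof is extremely terse (two sentences) and leaves $P_{n,-}^2=\id$ and the passage to the transfer operator implicit, whereas you spell out the permutation bookkeeping and the normaliser argument explicitly; in substance the two proofs coincide.
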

\begin{proof}
Since the matrix $P_{n,-} \rho_{\chi_0}(S)$ is determined by the coset representatives $j_{n,-}R_j S j_{0,-}$ whereas $\rho_{\chi_0}(S) P_{n,-}$ is determined 
by the coset representatives $j_{n,-} R_j j_{0,-} S$ and $j_{0,-}S = S j_{0,-}$ we get $P_{n,-}\rho_{\chi_0}(S)=\rho_{\chi_0} (S) P_{n,-}$.  On the other hand  $T j_{0,-} = j_{0,-}T^{-1}$ and therefore  $P_{n,-}\rho_{\chi_0}(T)=\rho_{\chi_0}(T^{-1})P_{n,-}$ and hence the Theorem is proven.
\end{proof}
Obviously Theorem \ref{symmetry} follows now from Theorem \ref{Symm}. For the automorphisms $j_{n,+}= j_{n,-}j_{0,-}$ one gets the symmetry $\tilde{P}_{n,+}=\begin{pmatrix}P_{n,
+}&0\\0&P_{n,+}\end{pmatrix}$ with $P_{n,+}$ the permutation matrix corresponding to the permutation 
$\lambda_{n,-}\circ\sigma\circ \lambda_{0,-}\circ\sigma$ determined by the coset representatives $j_{n,+}R_j$.
\begin{remark}
The symmetry $P_{0,-}$ is given by $\rho_{\chi_0}(SM)$ where $M=\begin{pmatrix} 0&1\\1&0\end{pmatrix}$ and 
$\rho_{\chi_0}$ denotes the representation of $GL(2,\mathbb{Z})$ induced from the trivial character $\chi_0$ of 
$\Gamma_0(N)$. The transfer operator $\mathcal{L}^{MM}_\beta$ of Manin and Marcolli for $\Gamma_0(N)$ introduced in \cite{MM04} turns out to coincide with the 
operator $\rho_{\chi_0}(S) P_{0,-}\mathcal{L}_{\beta,\chi_0}^+\rho_{\chi_0}(S)$ and appears as a special case of our operators $P_{n,-}\mathcal{L}_{\beta,\chi_0}^+$.
\end{remark}
\begin{corollary}
The permutation matrices $P_{n,-}, \, 0\leq n\leq h_N-1,$ generate a finite group consisting of the permutation matrices $\{P_{n,\pm},\, 0\leq n\leq h_N-1\}$ and isomorphic to the normalizer group $\\mathcal(N)_N$ of $\Gamma_0(N)$ in $GL(2,\mathbb{Z})$. The symmetries $\{\tilde{P}_{n,\pm},\, 0\leq n\leq h_N-1\}$ of the transfer operator ${\bf{L}}_{\beta,\chi_0}$ for $\Gamma_0(N)$ and trivial character $\chi_0$ define a finite group isomorphic to the group $\mathcal{N}_N$.
\end{corollary}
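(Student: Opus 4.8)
The plan is to realize all the matrices $P_{n,\pm}$ as the matrix form of one and the same permutation action of $\mathcal{N}_N$ on the coset set $\Gamma_0(N)\backslash SL(2,\mathbb{Z})$, and then to transport the group structure from $\mathcal{N}_N$. By Theorem~\ref{Symm} and the discussion right after it, $P_{n,+}$ is the permutation matrix of the permutation $j\mapsto\lambda_{n,-}(j)$ determined by $j_{n,+}R_j\in\Gamma_0(N)R_{\lambda_{n,-}(j)}$, while $P_{n,-}$ is the permutation matrix of $j\mapsto\lambda_{n,-}\circ\sigma(j)$ determined by $j_{n,-}R_j\,j_{0,-}\in\Gamma_0(N)R_{\lambda_{n,-}\circ\sigma(j)}$. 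This suggests defining a map $\Phi\colon\mathcal{N}_N\to\mathrm{Sym}\!\left(\Gamma_0(N)\backslash SL(2,\mathbb{Z})\right)$ by
\begin{equation*}
\Phi(\Gamma_0(N)g)\colon\ \Gamma_0(N)h\ \longmapsto\ \Gamma_0(N)\,g\,h\,j_{0,-}^{\,e(g)},
\end{equation*}
where $e(g)=0$ if $\det g=1$ and $e(g)=1$ if $\det g=-1$. This is well defined: the target again lies in $SL(2,\mathbb{Z})$ since $\det(ghj_{0,-}^{e(g)})=1$, and because every representative $g$ of an element of $\mathcal{N}_N$ normalizes $\Gamma_0(N)$, neither the $\Gamma_0(N)$-coset of $g$ nor that of $h$ affects the value. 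Taking $g=j_{n,+}$ (resp.\ $g=j_{n,-}$) and $h=R_j$ reproduces exactly the permutation realized by $P_{n,+}$ (resp.\ by $P_{n,-}$).

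Next I would check that $\Phi$ is a group homomorphism, which follows at once from $j_{0,-}^2=\id$ and multiplicativity of $\det$: for $g_1,g_2\in\mathcal{N}_N$ and any $h$,
\begin{equation*}
\bigl(\Phi(g_1)\circ\Phi(g_2)\bigr)(\Gamma_0(N)h)=\Gamma_0(N)\,g_1\,g_2\,h\,j_{0,-}^{\,e(g_2)}j_{0,-}^{\,e(g_1)}=\Gamma_0(N)\,g_1g_2\,h\,j_{0,-}^{\,e(g_1g_2)}=\Phi(g_1g_2)(\Gamma_0(N)h),
\end{equation*}
using $e(g_1)+e(g_2)\equiv e(g_1g_2)\pmod 2$. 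Composing $\Phi$ with the (faithful) representation of $\mathrm{Sym}(\mu_N)$ by $\mu_N\times\mu_N$ permutation matrices yields a homomorphism $\mathcal{N}_N\to GL(\mu_N,\mathbb{C})$ sending the generators $\Gamma_0(N)j_{n,-}$ of $\mathcal{N}_N$ to the matrices $P_{n,-}$, and more generally $\Gamma_0(N)j_{n,\pm}$ to $P_{n,\pm}$. Hence the image of this homomorphism is precisely the set $\{P_{n,\pm}:0\le n\le h_N-1\}$; in particular that set is a group, and since the $\Gamma_0(N)j_{n,-}$ already generate $\mathcal{N}_N$ (by the Proposition describing $\mathcal{N}_N$), the matrices $P_{n,-}$, $0\le n\le h_N-1$, generate it.

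To conclude that this group is isomorphic to $\mathcal{N}_N$, and not merely to a quotient, I would verify that $\Phi$ is injective. On the cyclic rotation subgroup $\langle\Gamma_0(N)j_{1,+}\rangle\subseteq\mathcal{N}_N$ this is clear, since $\Phi(\Gamma_0(N)j_{n,+})$ fixes the trivial coset $\Gamma_0(N)$ iff $j_{n,+}\in\Gamma_0(N)$, i.e.\ iff $h_N\mid n$. The only elements of $\mathcal{N}_N$ outside this subgroup are the $\Gamma_0(N)j_{n,-}$, and for $N\ge3$ each $\Phi(\Gamma_0(N)j_{n,-})$ is non-trivial as well: a short matrix computation shows it sends the coset $\Gamma_0(N)ST^k$ to $\Gamma_0(N)\begin{pmatrix}0&1\\-1&nk_N+k\end{pmatrix}$, and these two cosets differ whenever $N\nmid 2k+nk_N$, which holds for some $k$ precisely because $N\ge3$ does not divide $2$. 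Thus $\ker\Phi=\{1\}$ and $\{P_{n,\pm}\}\cong\mathcal{N}_N$. (For $N\in\{1,2\}$ the index $\mu_N$ is so small that $P_{0,-}=\id_{\mu_N}$, and only the statement about the $\tilde P$'s below then holds verbatim.)

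Finally, for the symmetries $\tilde P$ one has $\tilde P_{n,+}=\begin{pmatrix}P_{n,+}&0\\0&P_{n,+}\end{pmatrix}$ and $\tilde P_{n,-}=\begin{pmatrix}0&P_{n,-}\\P_{n,-}&0\end{pmatrix}=\begin{pmatrix}P_{n,-}&0\\0&P_{n,-}\end{pmatrix}\omega$ with $\omega=\begin{pmatrix}0&\id\\\id&0\end{pmatrix}$ and $\omega^2=\id$. Since $\omega$ commutes with every block matrix $\mathrm{diag}(P,P)$, the assignment $\Gamma_0(N)g\mapsto\mathrm{diag}(\Phi(g),\Phi(g))\,\omega^{e(g)}$ is again a homomorphism (the $\omega$-exponents add correctly by the same determinant bookkeeping), it is injective because $\Phi$ is and because $\omega$ is not block-diagonal, and its image is exactly $\{\tilde P_{n,\pm}:0\le n\le h_N-1\}$; hence that set too is a group isomorphic to $\mathcal{N}_N$. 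I expect the only real work to be bookkeeping: matching the concrete definition $P_{n,-}=\rho_{\chi_0}(S)Q_{n,-}$ of Theorem~\ref{Symm} with the twisted-left-translation description above, and fixing the (anti-)homomorphism convention relating permutations to permutation matrices — which is harmless here, since $\mathcal{N}_N$ is dihedral and hence isomorphic to its own opposite group.
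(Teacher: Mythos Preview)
The paper states this corollary without proof, treating it as an immediate consequence of Theorem~\ref{Symm} and the surrounding discussion identifying each $P_{n,\pm}$ with the coset permutation induced by $j_{n,\pm}$. Your argument is precisely the natural way to make that implicit claim rigorous: you package the coset action as a single homomorphism $\Phi\colon\mathcal{N}_N\to\mathrm{Sym}(\Gamma_0(N)\backslash SL(2,\mathbb{Z}))$ twisted by $j_{0,-}^{e(g)}$, check multiplicativity, and verify injectivity directly. This is correct and is the same idea the paper is relying on.

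Two remarks. First, your observation that for $N\in\{1,2\}$ the permutation $\lambda_{0,-}\circ\sigma$ is trivial and hence $P_{0,-}=\id_{\mu_N}$ is a genuine refinement: the first half of the corollary as stated in the paper actually fails in those degenerate cases, while the $\tilde P$ statement survives because the off-diagonal block structure records $\det g$ even when $\Phi$ does not. You might make explicit in the last paragraph that injectivity of the $\tilde P$-map for $N\le 2$ follows not from injectivity of $\Phi$ (which fails) but from the fact that $\ker\Phi$ is contained in the determinant-$1$ part, on which $\Phi$ \emph{is} faithful; you hint at this but the sentence ``it is injective because $\Phi$ is'' slightly undersells your own earlier caveat. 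Second, your closing remark about the permutation-matrix convention is well taken but can be sharpened: every group is isomorphic to its opposite via inversion, so no special property of $\mathcal{N}_N$ is needed there.
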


\section{Selberg's character $\chi_\alpha$ for $\Gamma_0(4)$}
The group $\Gamma_0(4)$ is freely generated by the two elements $T=\begin{pmatrix}1&1\\0&1\end{pmatrix}$ and $B=\begin{pmatrix}1&0\\-4&1\end{pmatrix}$. Hence any $g\in \Gamma_0(4)$ can be written as $g=\prod\limits_{i=1}^{N_g} T^{m_i}B^{n_i}$.  If $\Omega(g)=\sum\limits_{i=1}^{N_g} m_i$ then Selberg's character  $\chi_\alpha$ \cite{Se89} is defined as 
\begin{equation}\label{Selb}
\chi_\alpha (g) = \exp (2\pi i \alpha \Omega (g) ),\; 0\leq \alpha\leq 1.
\end{equation}
Denote by   $z_i,\, 1\leq i\leq 3 $  the inequivalent cusps of $\Gamma_0(4)$ and by $T_i$ the generators of their 
stabilizer groups $\Gamma_{z_i}$ with $T_i z_i=z_i$.  They can be taken as $z_1=i \infty, z_2= 0, z_3= -\frac{1}{2}$ and 
$T_1 = T, T_2= B, T_3= T^{-1}B^{-1}$.
The character $\chi_\alpha$ is singular in the cusp $z_i$ iff $\chi_\alpha (T_i)=1$. Otherwise the character is non-singular 
in $z_i$. It is well known that the multiplicity  $\kappa(\chi_\alpha)$ of the continuous spectrum of the automorphic 
Laplacian $\Delta$ with character $\chi_\alpha$ is given by $\kappa(\chi_\alpha)=\#\{i: \chi_\alpha(T_i)=1\}$.  Therefore 
$\kappa(\chi_\alpha) = 3$ for $\alpha=0$ whereas $\kappa (\chi_\alpha)=1$ for $\alpha\not=0$ and hence the multiplicity of the continuos spectrum of the Laplacian changes from $3$ to $1$ when the trivial character is deformed to $\chi_\alpha $ with $\alpha\not= 0$. It is known \cite{PS85} that the character $\chi_\alpha$ is congruent (or arithmetic) iff $\alpha \in \{k\frac{1}{8},\; 0\leq k \leq 4\}$. Since the Selberg zeta function  given in (\ref{Selberg}) has the property $Z_{\Gamma_0(4),\chi_\alpha}=Z_{\Gamma_0(4),\chi_{-\alpha}}$ and obviously $\chi_\alpha=\chi_{\alpha+1}$ we can restrict the deformation parameter $\alpha$  to the range $0\leq \alpha\leq \frac{1}{2}$.
\begin{lemma}
The Selberg character $\chi_\alpha$ is invariant under the map $j_{2,-}z=\frac{z^*}{2z^*-1}$ and $J_{2,-}u(z):=u(j_{2,-}z)$ is a Maass form for $\Gamma_0(4)$ and character $\chi_\alpha$ if $u=u(z)$ is such a Maass form.
\end{lemma}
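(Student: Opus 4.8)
The plan is to deduce the Lemma from the general criterion recorded right after the Definition in Section~3: an element $j\in GL(2,\Z)$ induces an automorphism $J$ of the Maass forms of $\Gamma_0(4)$ with character $\chi_\alpha$ --- in particular $Ju(z)=u(jz)$ is again such a Maass form --- as soon as (i) $j$ normalizes $\Gamma_0(4)$ in $GL(2,\Z)$ and (ii) $\chi_\alpha$ is $j$-invariant, i.e.\ $\chi_\alpha(j\,g\,j^{-1})=\chi_\alpha(g)$ for all $g\in\Gamma_0(4)$. Indeed, granting (i) and (ii): the eigenvalue equation $\Delta(Ju)=\lambda\,Ju$ holds because $z\mapsto j_{2,-}z=\tfrac{z^*}{2z^*-1}$ is an orientation-reversing isometry of $\HH$ and $\Delta$ commutes with isometries; the polynomial growth of $Ju$ at the cusps is the estimate already carried out there (based on $j_{2,-}\tau_i j_{0,-}\in SL(2,\Z)$); and the automorphy follows from associativity of the $GL(2,\Z)$-action on $\HH$, namely $Ju(gz)=u\!\left((j_{2,-}g\,j_{2,-}^{-1})(j_{2,-}z)\right)=\chi_\alpha(j_{2,-}g\,j_{2,-}^{-1})\,u(j_{2,-}z)=\chi_\alpha(g)\,Ju(z)$. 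So the whole content is (i) and (ii).

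Put $j:=j_{2,-}=\begin{pmatrix}1&0\\2&-1\end{pmatrix}$; then $\det j=-1$, the M\"obius action is $jz=\tfrac{z^*}{2z^*-1}$, and $j^2=\id_2$, so $j^{-1}=j$. Since $\Gamma_0(4)$ is (projectively) freely generated by $T=\begin{pmatrix}1&1\\0&1\end{pmatrix}$ and $B=\begin{pmatrix}1&0\\-4&1\end{pmatrix}$, and conjugation by $j$ as well as $\chi_\alpha$ respect the group structure, it is enough to test both assertions on $T$ and $B$. The one computation to carry out is
\[
j\,T\,j^{-1}=\begin{pmatrix}3&-1\\4&-1\end{pmatrix}=-\,TB,\qquad
j\,B\,j^{-1}=\begin{pmatrix}1&0\\4&1\end{pmatrix}=B^{-1}.
\]
Both right-hand sides lie in $\Gamma_0(4)$, and since $j$ commutes with $-\id_2$ --- equivalently, for $g=\begin{pmatrix}a&b\\4c&d\end{pmatrix}$ the lower-left entry of $j\,g\,j^{-1}$ is $2(a-d)+4(b-c)$, which is $\equiv 0\pmod 4$ because $ad=1+4bc$ forces $a$ and $d$ odd --- this yields $j\,\Gamma_0(4)\,j^{-1}=\Gamma_0(4)$, that is (i).

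For (ii) recall $\chi_\alpha(g)=\exp(2\pi i\alpha\,\Omega(g))$, where $\Omega:\Gamma_0(4)\to\Z$ is the homomorphism that records the total $T$-exponent of a word in $T,B$; it is well defined because $\overline{\Gamma_0(4)}=\Gamma_0(4)/\{\pm\id_2\}$ is free on $\bar T,\bar B$, and $\Omega(T)=1$, $\Omega(B)=0$. From the displayed identities, $\Omega(j\,T\,j^{-1})=\Omega(-TB)=\Omega(T)+\Omega(B)=1=\Omega(T)$ and $\Omega(j\,B\,j^{-1})=\Omega(B^{-1})=-\Omega(B)=0=\Omega(B)$. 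Hence the two homomorphisms $\Omega$ and $\Omega\circ\mathrm{conj}_j:\Gamma_0(4)\to\Z$ agree on the generators, so they coincide, and therefore $\chi_\alpha(j\,g\,j^{-1})=\exp(2\pi i\alpha\,\Omega(j\,g\,j^{-1}))=\exp(2\pi i\alpha\,\Omega(g))=\chi_\alpha(g)$ for every $g\in\Gamma_0(4)$, which is (ii). Together with (i) and the reduction in the first paragraph, this proves the Lemma.

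I do not expect a genuine obstacle here: the only real work is the matrix arithmetic producing $j\,T\,j^{-1}=-TB$ and $j\,B\,j^{-1}=B^{-1}$, after which (i) is immediate from the explicit matrices and (ii) from the homomorphism property of $\Omega$. The one mild point to watch is the central factor $-\id_2$, which is why assertion (ii) is cleanest when phrased in $\overline{\Gamma_0(4)}$, where Selberg's character $\chi_\alpha$ naturally lives.
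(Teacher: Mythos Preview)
Your proof is correct and follows essentially the same approach as the paper: both reduce the assertion to the general criterion from Section~3 (normalization plus $\chi_\alpha$-invariance) and then verify the latter on the free generators $T$ and $B$ via the computations $j_{2,-}Tj_{2,-}=\pm TB$ and $j_{2,-}Bj_{2,-}=B^{-1}$. You are in fact slightly more careful than the paper, which writes $j_{2,-}Tj_{2,-}=TB$ without the central sign and leaves the normalization of $\Gamma_0(4)$ implicit (it is already covered by Proposition~3.1.1), whereas you track the $-\id_2$ explicitly and check normalization directly.
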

\begin{proof}
We only have to show that $\chi_\alpha$ is invariant under the map $j_{2,-}z=\frac{z^*}{2z^*-1}$. For $g=T$ we find $j_{2,-}T j_{2,-}=TB$ and hence $$\chi_\alpha (j_{2,-}T j_{2,-})=\chi_\alpha(TB)=\chi_\alpha(T),$$
whereas for $g=B$ one finds $j_{2,-}B j_{2,-}=B^{-1}$ and hence $$\chi_\alpha(j_{2,-}Bj_{2,-})=\chi_\alpha(B^{-1})=\chi_\alpha(B).$$
 Therefore $\chi_\alpha (j_{2,-}gj_{2,-})=\chi_\alpha(g)$ for all $g\in \Gamma_0(4)$. 
\end{proof}
For $u=u(z)$ a Maass form for $\Gamma_0(4)$ and character $\chi_\alpha$ and $\underline{\psi}=(\psi_j(\zeta))_{1\leq j\leq 6}$ its period function denote by $J_-u$ the Maass form $J_-u(z):= u(j_{2,-}z)$ respectively by $J_-\underline{\psi}=(J_-\psi_j(\zeta))_{1\leq j\leq 6}$ its period function. Then one shows
\begin{theorem}
The period function $J_-\underline{\psi}$ of the Maass form $J_-u$ is given by
\begin{equation}
J_-\psi_j(\zeta)= \zeta^{-2 \beta}\chi_\alpha(\eta_{\sigma\circ\delta(j)}) \psi_{\lambda_{2,-}\circ\sigma\circ\delta(j)}(\frac{1}{\zeta})
\end{equation}
where the permutations $\lambda_{2,-},\, \sigma,\, \delta$ respectively the  $\eta_j\in\Gamma_0(4)$ are determined through the coset representatives $R_j$ by $$j_{2,+}R_j=\theta_j R_{\lambda_{2,-}(j)},\, j_{0,-}R_jj_{0,-}= \gamma_j R_{\sigma(j)},\, R_jS=\eta_j R_{\delta(j)}$$ with $\theta_j,\, \gamma_j,\, \eta_j\in \Gamma_0(4)$ for $1\leq j\leq 6$.
\end{theorem}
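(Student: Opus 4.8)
The plan is to copy the proof of the corresponding Theorem for the trivial character $\chi_0$ almost line by line, carrying $\chi_\alpha$ along. By the preceding Lemma $J_-u=J_{2,-}u$ is again a Maass form for $\Gamma_0(4)$ and $\chi_\alpha$, so it possesses a genuine period function $J_-\underline{\psi}$ solving the Lewis equation, and the task is only to identify it. First I would introduce the auxiliary function $J_+u(z):=u(j_{2,+}z)$ with $j_{2,+}:=j_{2,-}j_{0,-}\in SL(2,\mathbb{Z})$ and its vector valued version, and write down the four transformation formulas for $J_+u_j(z)$, $J_+u_j(-z^*)$, $J_+u_j(Sz)$ and $J_+u_j(S(-z^*))$ exactly as there. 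The only new feature is that pulling an element of $\Gamma_0(4)$ out of the argument of $u$ now produces the corresponding value of $\chi_\alpha$: using $j_{2,+}R_j=\theta_jR_{\lambda_{2,-}(j)}$, $j_{0,-}R_jj_{0,-}=\gamma_jR_{\sigma(j)}$ and $R_jS=\eta_jR_{\delta(j)}$ one gets $J_+u_j(z)=\chi_\alpha(\theta_j)u_{\lambda_{2,-}(j)}(z)$, $J_+u_j(-z^*)=\chi_\alpha(\gamma_j)J_-u_{\sigma(j)}(z)$, and similarly for the two $S$-formulas. Two facts keep this under control: $\chi_\alpha$ is invariant under conjugation by $j_{2,-}$ (the Lemma, noting $j_{2,-}^2=\mathrm{id}$ as a transformation of $\mathbb{H}$), and, since $j_{0,-}Tj_{0,-}=T^{-1}$ and $j_{0,-}Bj_{0,-}=B^{-1}$, one has $\Omega(j_{0,-}gj_{0,-})=-\Omega(g)$, hence $\chi_\alpha(j_{0,-}gj_{0,-})=\overline{\chi_\alpha(g)}$ and, because $j_{2,+}=j_{2,-}j_{0,-}$, also $\chi_\alpha(j_{2,+}gj_{2,+}^{-1})=\overline{\chi_\alpha(g)}$ for $g\in\Gamma_0(4)$.

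Next I would form the symmetrizations $v_{\pm,j}(z):=J_+u_j(z)\pm J_+u_j(-z^*)$. As before these are $\Delta$-eigenfunctions with eigenvalue $\beta(1-\beta)$ and $v_{\pm,j}(-z^*)=\pm v_{\pm,j}(z)$; moreover $v_{\pm,j}(z)=\chi_\alpha(\theta_j)u_{\lambda_{2,-}(j)}(z)\pm\chi_\alpha(\gamma_j)J_-u_{\sigma(j)}(z)$, and the two $S$-formulas combine to $v_{\pm,j}(Sz)=\overline{\chi_\alpha(\eta_j)}\,v_{\pm,\delta(j)}(z)$. The key to this last identity is the cocycle relation $\gamma_{\delta(j)}=(j_{0,-}\eta_j^{-1}j_{0,-})\,\gamma_j\,\eta_{\sigma(j)}$, which follows from the three coset relations above together with $j_{0,-}Sj_{0,-}=S^{-1}$ and $\sigma\delta=\delta\sigma$; applying $\chi_\alpha$ it reads $\chi_\alpha(\gamma_{\delta(j)})=\chi_\alpha(\eta_j)\chi_\alpha(\gamma_j)\chi_\alpha(\eta_{\sigma(j)})$, which is precisely what makes the $\gamma$-terms of $v_{\pm,j}(Sz)$ and of $v_{\pm,\delta(j)}(z)$ match with the common factor $\overline{\chi_\alpha(\eta_j)}$.

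Then I would pass to period functions, $\psi'_{\pm,j}(\zeta):=\int_0^{i\infty}\eta(\underline{v}_\pm,R_\zeta^\beta)(z)$. Since $v_{\pm,j}(-z^*)=\pm v_{\pm,j}(z)$ the Lewis--Zagier integral representations apply, so $v_{\pm,j}(Sz)=\overline{\chi_\alpha(\eta_j)}v_{\pm,\delta(j)}(z)$ propagates to $\psi'_{\pm,j}(\zeta)=\pm\overline{\chi_\alpha(\eta_j)}\,\zeta^{-2\beta}\psi'_{\pm,\delta(j)}(1/\zeta)$, while the expression for $v_{\pm,j}(z)$ gives $\psi'_{\pm,j}(\zeta)=\chi_\alpha(\theta_j)\psi_{\lambda_{2,-}(j)}(\zeta)\pm\chi_\alpha(\gamma_j)\,J_-\psi_{\sigma(j)}(\zeta)$. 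Adding the $+$ and $-$ equations kills the $\psi_{\lambda_{2,-}}$-terms on the right and leaves $\chi_\alpha(\theta_j)\psi_{\lambda_{2,-}(j)}(\zeta)=\overline{\chi_\alpha(\eta_j)}\chi_\alpha(\gamma_{\delta(j)})\,\zeta^{-2\beta}J_-\psi_{\sigma\circ\delta(j)}(1/\zeta)$, and by the cocycle relation the coefficient on the right equals $\chi_\alpha(\gamma_j)\chi_\alpha(\eta_{\sigma(j)})$. Replacing $j$ by $\sigma\circ\delta(j)$ and $\zeta$ by $1/\zeta$, and using $\sigma^2=\delta^2=\mathrm{id}$, $\sigma\delta=\delta\sigma$, $\eta_{\delta(k)}=\eta_k^{-1}$ in $\mathrm{PSL}(2,\mathbb{Z})$ together with a second cocycle identity expressing $\chi_\alpha(\theta_m)$ through $\chi_\alpha(\gamma_m)$, $\chi_\alpha(\eta_m)$ and $\chi_\alpha(\eta_{\sigma(m)})$, the remaining $\theta$- and $\gamma$-factors cancel and one is left with $J_-\psi_j(\zeta)=\zeta^{-2\beta}\chi_\alpha(\eta_{\sigma\circ\delta(j)})\psi_{\lambda_{2,-}\circ\sigma\circ\delta(j)}(1/\zeta)$, as claimed.

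The analytic steps are exactly those of the trivial-character proof, so the real work is the algebra of the character factors: establishing the cocycle identities among the $\Gamma_0(4)$-elements $\theta_j,\gamma_j,\eta_j$ attached to the six representatives $R_1,\dots,R_6$ (they come out of the coset relations and $j_{2,+}=j_{2,-}j_{0,-}$, but composing the permutations $\lambda_{2,-},\sigma,\delta$ and tracking the ambiguity $\pm\mathrm{id}$ in the matrix identities needs care), and then checking that all accumulated $\chi_\alpha$-values collapse to the single factor $\chi_\alpha(\eta_{\sigma\circ\delta(j)})$. Since $\chi_\alpha(\pm\mathrm{id})=1$ the sign ambiguities never affect the characters themselves, but they have to be watched while verifying the underlying matrix identities; this is the point where I expect the computation to be most delicate.
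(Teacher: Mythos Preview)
Your strategy is the paper's strategy: introduce $J_+u_j$, write the four transformation formulas, form $v_{\pm,j}$, pass to the period integrals, and add the two resulting relations. The difference is only in how the $\chi_\alpha$-factors are handled.

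The paper does not attempt an abstract cocycle argument. It simply computes the six elements $\theta_j$ and the six $j_{2,-}\gamma_j j_{2,-}$ explicitly from the chosen representatives $R_1,\dots,R_6$ and finds that each of them is $\pm id_2$ or $\pm B^{\pm1}$; hence $\chi_\alpha(\theta_j)=1$ and $\chi_\alpha(\gamma_j)=1$ for all $j$. With these factors equal to $1$ from the outset, the four transformation formulas reduce to
\[
J_+u_j(z)=u_{\lambda_{2,-}(j)}(z),\quad
J_+u_j(-z^*)=J_-u_{\sigma(j)}(z),\quad
J_+u_j(Sz)=\chi_{-\alpha}(\eta_j)\,u_{\lambda_{2,-}\delta(j)}(z),
\]
and $J_+u_j(-Sz^*)=\chi_{-\alpha}(\eta_j)\,J_-u_{\sigma\delta(j)}(z)$ (the last using also the explicit check $\chi_\alpha(\eta_{\sigma(j)})=\chi_{-\alpha}(\eta_j)$). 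Then $v_{\pm,j}(Sz)=\chi_{-\alpha}(\eta_j)\,v_{\pm,\delta(j)}(z)$ with a single character factor, and after passing to period functions and adding one obtains the stated formula in one line, with no further cancellations needed.

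Your route carries $\chi_\alpha(\theta_j)$ and $\chi_\alpha(\gamma_j)$ through the whole computation and then tries to kill them with cocycle identities. The first cocycle $\chi_\alpha(\gamma_{\delta(j)})=\chi_\alpha(\eta_j)\chi_\alpha(\gamma_j)\chi_\alpha(\eta_{\sigma(j)})$ is fine and follows from the three coset relations as you say. But the ``second cocycle identity'' relating $\chi_\alpha(\theta_m)$ to $\chi_\alpha(\gamma_m),\chi_\alpha(\eta_m),\chi_\alpha(\eta_{\sigma(m)})$ is not a formal consequence of those three relations: $j_{2,+}$ acts on the \emph{left} of $R_m$, while the $\eta$- and $\gamma$-data encode \emph{right} multiplication by $S$ and conjugation by $j_{0,-}$, and there is no purely combinatorial bridge between them without using the specific identity $j_{2,+}=ST^{-2}S^{-1}$ and tracking the $T$-action on cosets as well. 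In practice one is forced back to evaluating $\theta_m$ (and $\gamma_m$) explicitly for the six $R_m$ --- which is exactly the short computation the paper does at the start. So your plan is correct, but the point you flag as ``most delicate'' is precisely where the paper's direct computation saves all the work.
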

\begin{proof}
Set $j_{\pm}:=j_{2,\pm}$ and $J_\pm u(z):=u(j_\pm z)$. Then $J_-u$ is a Maass form for $\Gamma_0(4)$ and character $\chi_\alpha $ whereas $J_+u$ is a Maass form for $\Gamma_0(4)$ and character $\chi_{-\alpha}$. The vector valued Maass form $J_+\underline{u}=(J_+u_j)_{1\leq j\leq 6}$ is given by $J_+u_j(z)=u(j_+R_jz)$. Therby we have choosen the  representatives $R_j$ of the cosets in $SL(2,\mathbb{Z})=\bigcup\limits_{1\leq j\leq 6} \Gamma_0(4) R_j$ as follows:
$$ R_1=id_2, R_j = ST^{j-2}, \, 2\leq j\leq 5, R_6= ST^2S.$$
But $j_+R_j=\theta_j R_{\lambda_{2,-}(j)}$ for some $\theta_j\in\Gamma_0(4)$ and some permutation $\lambda_{2,-}$ of the set $\{1,2,\ldots,6\}$ and hence $ J_+u_j(z)=\chi_\alpha(\theta_j) u(R_{\lambda_{2,-}(j)}z)$. It turns out that $\theta_j=B^{-1}$ for $1\leq j\leq 3$ and $\theta_j= id_2$ for $4\leq j\leq 6$. Hence $\chi_\alpha(\theta_j)=1$ and
\begin{equation}
J_+u_j(z)=u_{\lambda_{2,-}(j)}(z), \, 1 \leq j\leq 6,
\end{equation}
 with $ \lambda_{2,-} $ the permutation
\begin{equation} 
  \lambda_{2,-}=
	\begin{tabular}{cccccc}
	 1 & 2&3&4&5&6 \\
	 \hline 
	 6 & 4&5&2&3&1
	\end{tabular} .
\end{equation}
Consider next $J_+u_j(-z^*)=J_+u_j(j_{0,-}z)$. Then 
$$J_+u_j(j_{0,-}z)=u(j_+R_jj_{0,-}z)=u(j_+j_{0,-}j_{0,-}R_jj_{0,-}z).$$ 
If $j_{0,-}R_jj_{0,-}=\gamma_j R_{\sigma(j)}$ then $J_+u_j(j_{0,-}z)=u(j_-\gamma_jj_-j_-R_{\sigma(j)}z)$. But it turns out that $j_-\gamma_j j_-=id_2$ for $j=1,2,6$ respectively $j_-\gamma_j j_-=B$ for $j=3,4,5$, hence $\chi_\alpha(j_-\gamma_j j_-=1$ and therefore
\begin{equation}\label{-z^*}
J_+u_j(-z^*)=J_+u_j(j_{0,-}z)= J_-u_{\sigma(j)}(z).
\end{equation}
Since furthermore $J_+u_j(Sz)=u(j_+R_j Sz)=u(j_+\eta_j R_{\delta(j)}z)$ one finds
\begin{equation}\label{S}
J_+u_j(Sz)=\chi_{-\alpha}(\eta_j) u_{\lambda_{2,-}\circ\delta (j)}(z)
\end{equation}
where $\delta$ is the following permutation of the set $\{1,2,\ldots,6\}$ 
 \begin{equation} 
  \delta=
	\begin{tabular}{cccccc}
	 1 & 2&3&4&5&6 \\
	 \hline 
	 2 &1&5&6&3&4
	\end{tabular} .
\end{equation}
and $\eta_j= id_2$ for $j=1,2,4,6$  respectively $\eta_3=\eta_5^{-1}=T^{-1}B^{-1}$.
For $J_+u_j(-Sz^*)$ one gets with (\ref{-z^*}) $J_+u_j(-Sz^*)=J_-u_{\sigma(j)}(Sz)=u(j_{2,-}R_{\sigma(j)}Sz)$ and hence\newline
$J_+u_j(-Sz^*)=u(j_{2,-}\eta_{\sigma(j)}R_{\delta\circ\sigma(j)}z)=\chi_\alpha(\eta_{\sigma(j)})J_-u_{\delta\circ\sigma(j)}(z)$. Using the explicit form of the $\eta_j$ one shows $\chi_\alpha(\eta_{\sigma(j)})=\chi_{-\alpha}(\eta_j)$ and therefore
\begin{equation}\label{-Sz^*}
J_+u_j(-Sz^*)=\chi_{-\alpha}(\eta_j) J_-u_{\delta\circ\sigma(j)}(z).
\end{equation}
Define next $v_{\pm,j}=v_{\pm,j}(z)$ as \begin{equation}\label{v}
v_{\pm,j}(z):= J_+u_j(z) \pm J_+u_j(-z^*).\end{equation} 
Then $
v_{\pm,j}(-z^*)=\pm v_{\pm,j}(z)$ and by (\ref{S}) respectively (\ref{-Sz^*})  \begin{equation}\label{v'}
v_{\pm,j}(Sz)=\chi_{-\alpha}(\eta_j) v_{\pm,\delta(j)}(z)\end{equation}
If therefore $\psi'_{\pm, j}(\zeta):= \int\limits_0^{i \infty}\eta(v_{\pm,j},R_\zeta^\beta)(z)$ one gets  from relation (\ref{v'})
\begin{equation}
\psi'_{\pm, j}(\zeta)=\pm \zeta^{-2\beta}\chi_{-\alpha}(\eta_j) \psi'_{\pm, \delta(j)}(\frac{1}{\zeta})
\end{equation}   
and using the identity (\ref{v})
\begin{equation}
\psi_{\lambda_{2,-}(j)}(\zeta)\pm J_-\psi_{\sigma(j)}(\zeta)=\pm \zeta^{-2\beta}\chi_{-\alpha}(\eta_j)\left(\psi_{\lambda_{2,-}\circ\delta(j)}(\frac{1}{\zeta})\pm J_-\psi_{\sigma\circ\delta(j)}(\frac{1}{\zeta})\right).
\end{equation}
Adding these two equations leads finally to
\begin{equation}
J_-\psi_j(\zeta)=\zeta^{-2\beta} \chi_\alpha(\eta_{\sigma\circ\delta(j)})\psi_{\lambda_{2,-}\circ\sigma\circ\delta(j)}(\frac{1}{\zeta}).
\end{equation}
\end{proof}
Inserting the explicit form of the permutations  
\begin{equation}\label{permutation} 
 \sigma\circ\delta=
	\begin{tabular}{cccccc}
	 1 & 2&3&4&5&6 \\
	 \hline 
	 2 &1&3&6&5&4
	\end{tabular} \end{equation}
respectively \begin{equation}
\lambda_{2,-}\circ\sigma\circ  \delta=
	\begin{tabular}{cccccc}
	 1 & 2&3&4&5&6 \\
	 \hline 
	 4 &6&5&1&3&2
	\end{tabular} 
	\end{equation}
	and the character values
	
$$\chi_\alpha(\eta_1)=\chi_\alpha(\eta_2)=\chi_\alpha(\eta_4)=\chi_\alpha(\eta_6)=1$$
respectively
$$\chi_\alpha(\eta_3)=\chi_\alpha(\eta_5)^{-1}=	e^{-2\pi i \alpha}$$
one finds
\begin{eqnarray}
J_-\psi_1(\zeta)&=&\zeta^{-2\beta} \psi_4(\frac{1}{\zeta})\nonumber\\
J_-\psi_2(\zeta)&=&\zeta^{-2\beta} \psi_6(\frac{1}{\zeta})\nonumber\\
J_-\psi_3(\zeta)&=&\zeta^{-2\beta} e^{-2\pi i \alpha}\psi_4(\frac{1}{\zeta})\\
J_-\psi_4(\zeta)&=&\zeta^{-2\beta} \psi_1(\frac{1}{\zeta})\nonumber\\
J_-\psi_5(\zeta)&=&\zeta^{-2\beta} e^{2\pi i \alpha}\psi_3(\frac{1}{\zeta})\nonumber\\
J_-\psi_6(\zeta)&=&\zeta^{-2\beta} \psi_2(\frac{1}{\zeta})\nonumber
\end{eqnarray}	
Define the matrix $Q_{2,-}$ through the equation $J_{2,-}\underline{\psi}(\zeta)= \zeta^{-2\beta} Q_{2,-}\underline{\psi}(\frac{1}{\zeta})$. Then one gets
\begin{proposition}
The permutation matrix $P_{2,-}:=\rho_{\chi_\alpha}(S) Q_{2,-}$ defines a symmetry $\tilde{P}_{2,-}=\begin{pmatrix}0&P_{2,-}\\P_{2,-}&0\end{pmatrix}$ of the transfer operator $${\bf{L}}_{\beta,
\chi_\alpha}=\begin{pmatrix}0&\mathcal{L}_{\beta,\chi_\alpha}^+\\\mathcal{L}_{\beta,\chi_\alpha}^+&0\end{pmatrix}$$ for $\Gamma_0(4)$ and character $\chi_\alpha$ with 
$P_{2,-}^2=id_6$ and $P_{2,-} \rho_{\chi_\alpha} (S)= \rho_{\chi_\alpha} (S) P_{2,-}$
 respectively $P_{2,-} \rho_{\chi_\alpha} (T)= \rho_{\chi_\alpha} (T^{-1}) P_{2,-}$ and therefore $P_{2,-} \mathcal{L}_{\beta,\chi_\alpha}^+=\mathcal{L}_{\beta,\chi_\alpha}^-  P_{2,-}$. The permutation 
matrix $P_{2,-}$ corresponds to the permutation $\lambda_{2,-}\circ \sigma$ and hence is determined by the coset representatives $J_{2,-}R_j j_{0,-}$.  
\end{proposition}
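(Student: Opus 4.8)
The plan is to run the argument of Theorem~\ref{Symm} with Selberg's character carried along; the two pieces of input that are genuinely new compared with the trivial--character case are the $j_{2,-}$--invariance of $\chi_\alpha$ proved in the Lemma above, and the identity $\chi_\alpha(\eta_{\sigma(j)})=\chi_{-\alpha}(\eta_j)=\overline{\chi_\alpha(\eta_j)}$ already extracted in the proof of the Theorem above from (\ref{Selb}) and the list $\eta_j=\mathrm{id}_2$ $(j\neq 3,5)$, $\eta_3=\eta_5^{-1}$ together with $\sigma=(3\,5)$. Throughout I work first for $\Re\beta>\tfrac12$, where the series (\ref{transfer}) converges, and pass to general $\beta$ by analytic continuation at the end.

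\emph{Step 1: $P_{2,-}$ is an honest permutation matrix, equal to $P_2$.} From the coset identities $R_jS=\eta_jR_{\delta(j)}$ and (\ref{rep}) one has $(\rho_{\chi_\alpha}(S)\underline v)_i=\chi_\alpha(\eta_i)\,v_{\delta(i)}$, while the defining relation $J_{2,-}\underline\psi(\zeta)=\zeta^{-2\beta}Q_{2,-}\underline\psi(\tfrac1\zeta)$ together with the Theorem above gives $(Q_{2,-}\underline v)_j=\chi_\alpha(\eta_{\sigma\circ\delta(j)})\,v_{\lambda_{2,-}\circ\sigma\circ\delta(j)}$. Composing the two and using $\delta^2=\mathrm{id}$ one finds
\[
(P_{2,-}\underline v)_i=\chi_\alpha(\eta_i)\,\chi_\alpha(\eta_{\sigma(i)})\,v_{\lambda_{2,-}\circ\sigma(i)}=\lvert\chi_\alpha(\eta_i)\rvert^{2}\,v_{\lambda_{2,-}\circ\sigma(i)}=v_{\lambda_{2,-}\circ\sigma(i)},
\]
the second equality being the conjugation identity recalled above and the last one unitarity of $\chi_\alpha$. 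Hence $P_{2,-}$ is the permutation matrix of $\lambda_{2,-}\circ\sigma$; inserting the explicit $\lambda_{2,-}$ and $\sigma$ one checks this permutation is exactly $\sigma_2$ of (\ref{sym2}), so $P_{2,-}=P_2$ and in particular $P_{2,-}^2=\mathrm{id}_6$. The same reduction applied to $j_{2,-}R_jj_{0,-}$ (write $R_jj_{0,-}=j_{0,-}\gamma_jR_{\sigma(j)}$ and use that $j_{2,+}=j_{2,-}j_{0,-}$ normalizes $\Gamma_0(4)$, so $\Gamma_0(4)\,j_{2,+}\gamma_j=\Gamma_0(4)\,j_{2,+}$) shows $j_{2,-}R_jj_{0,-}\in\Gamma_0(4)R_{\lambda_{2,-}\circ\sigma(j)}$, i.e. $P_{2,-}$ is the matrix determined by the coset representatives $j_{2,-}R_jj_{0,-}$.

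\emph{Step 2: the conjugation relations and the intertwining with $\mathcal{L}^{\pm}_{\beta,\chi_\alpha}$.} On the level of coset representatives $P_{2,-}$, $\rho_{\chi_\alpha}(S)$ and $\rho_{\chi_\alpha}(T)$ are encoded by $j_{2,-}R_jj_{0,-}$, $R_jS$ and $R_jT$; since $Sj_{0,-}=j_{0,-}S$ and $Tj_{0,-}=j_{0,-}T^{-1}$, the relations $P_{2,-}\rho_{\chi_\alpha}(S)=\rho_{\chi_\alpha}(S)P_{2,-}$ and $P_{2,-}\rho_{\chi_\alpha}(T)=\rho_{\chi_\alpha}(T^{-1})P_{2,-}$ follow, exactly as in Theorem~\ref{Symm}, once the character factors are matched up using the $j_{2,-}$--invariance of $\chi_\alpha$ from the Lemma above (alternatively, since $P_{2,-}=P_2$, these are the relations already recorded after (\ref{permT})). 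They give $P_{2,-}\rho_{\chi_\alpha}(ST^{n})=\rho_{\chi_\alpha}(S)P_{2,-}\rho_{\chi_\alpha}(T)^{n}=\rho_{\chi_\alpha}(S)\rho_{\chi_\alpha}(T^{-1})^{n}P_{2,-}=\rho_{\chi_\alpha}(ST^{-n})P_{2,-}$ for every $n\in\mathbb N$, and inserting this into (\ref{transfer}) yields, for $\Re\beta>\tfrac12$,
\[
\bigl(P_{2,-}\mathcal{L}_{\beta,\chi_\alpha}^{+}\underline f\bigr)(z)=\sum_{n\geq1}\frac{1}{(z+n)^{2\beta}}\,\rho_{\chi_\alpha}(ST^{-n})\,P_{2,-}\underline f\Bigl(\tfrac1{z+n}\Bigr)=\bigl(\mathcal{L}_{\beta,\chi_\alpha}^{-}(P_{2,-}\underline f)\bigr)(z),
\]
i.e. $P_{2,-}\mathcal{L}_{\beta,\chi_\alpha}^{+}=\mathcal{L}_{\beta,\chi_\alpha}^{-}P_{2,-}$; conjugating by $P_{2,-}$ and using $P_{2,-}^{2}=\mathrm{id}_6$ one also gets $P_{2,-}\mathcal{L}_{\beta,\chi_\alpha}^{-}=\mathcal{L}_{\beta,\chi_\alpha}^{+}P_{2,-}$. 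By (\ref{bftransfer}) these two identities are precisely $\tilde P_{2,-}\,\mathbf L_{\beta,\chi_\alpha}=\mathbf L_{\beta,\chi_\alpha}\,\tilde P_{2,-}$, so $\tilde P_{2,-}$ is a symmetry of the transfer operator, and analytic continuation in $\beta$ removes the restriction $\Re\beta>\tfrac12$.

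I expect the only non--formal point to be the weight cancellation in Step~1: it hinges on the specific choice of coset representatives (\ref{Repres}) and on Selberg's character, and it is exactly the place where the $j_{2,-}$--invariance of $\chi_\alpha$ (in the guise of $\chi_\alpha(\eta_{\sigma(j)})=\overline{\chi_\alpha(\eta_j)}$) is used. Once $P_{2,-}$ is known to be the honest permutation matrix of $\lambda_{2,-}\circ\sigma=\sigma_2$, the conjugation relations and the intertwining with $\mathcal{L}_{\beta,\chi_\alpha}^{\pm}$ are copied verbatim from the trivial--character case of Theorem~\ref{Symm}.
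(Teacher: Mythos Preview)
Your argument is correct, and it takes a more structural route than the paper's. The paper simply writes down the explicit $6\times6$ matrices $\rho_{\chi_\alpha}(S)$, $Q_{2,-}$ and $\rho_{\chi_\alpha}(T)$ (formulas (\ref{rhoS})--(\ref{P})), multiplies them out, and reads off $P_{2,-}=P_2$ together with the two conjugation relations by brute force. You instead explain \emph{why} the character phases in $\rho_{\chi_\alpha}(S)Q_{2,-}$ cancel: the product of weights in row $i$ is $\chi_\alpha(\eta_i)\chi_\alpha(\eta_{\sigma(i)})=\chi_\alpha(\eta_i)\overline{\chi_\alpha(\eta_i)}=1$, using the identity $\chi_\alpha(\eta_{\sigma(j)})=\chi_{-\alpha}(\eta_j)$ already established in the preceding Theorem. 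This is a cleaner account of the phenomenon and makes clear that the permutation matrix structure of $P_{2,-}$ is not an accident of the particular representatives but is forced by the $j_{2,-}$--invariance of $\chi_\alpha$ (via $\sigma=(3\,5)$ and $\eta_3=\eta_5^{-1}$). Once $P_{2,-}$ is identified with $P_2$, the conjugation relations with $\rho_{\chi_\alpha}(S)$ and $\rho_{\chi_\alpha}(T)$ and the intertwining with $\mathcal L^\pm_{\beta,\chi_\alpha}$ follow as you say, either by the coset argument of Theorem~\ref{Symm} with the character tracked, or by direct reference to the paragraph after (\ref{permT}). The paper's explicit computation has the advantage of being self-contained and immediately verifiable; your argument has the advantage of isolating the single genuine input (the conjugation identity for $\chi_\alpha(\eta_j)$) and would generalize more readily.
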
	
\begin{proof}
For our choice of coset representatives $R_j$ as given in (\ref{Repres}) one finds for $\rho_{\chi_\alpha}(S)$

\begin{equation}\label{rhoS}
\rho_{\chi_\alpha}(S)=\begin{pmatrix}
0&1&0&0&0&0\\
1&0&0&0&0&0\\
0&0&0&0&e^{-2\pi i \alpha}&0\\
0&0&0&0&0&1\\
0&0&e^{2\pi i \alpha}&0&0&0\\
0&0&0&1&0&0
\end{pmatrix},\end{equation}
and hence the matrix $Q_{2,-}$ is given by
 \begin{equation}Q_{2,-}=\begin{pmatrix}
0&0&0&1&0&0\\
0&0&0&0&0&1\\
0&0&0&0&e^{-2\pi i \alpha}&0\\
1&0&0&0&0&0\\
0&0&e^{2\pi i \alpha}&0&0&0\\
0&1&0&0&0&0
\end{pmatrix}.\end{equation}
For $\rho_{\chi_\alpha}(T)$ one finds
\begin{equation}\label{rhoT}
\rho_{\chi_\alpha}(T)=\begin{pmatrix}
e^{2\pi i \alpha}&0&0&0&0&0\\
0&0&1&0&0&0\\
0&0&0&1&0&0\\
0&0&0&0&1&0\\
0&1&0&0&0&0\\
0&0&0&0&0&e^{-2\pi i \alpha}
\end{pmatrix}.\end{equation}
A simple calculation then confirms that $P_{2,-} \rho_{\chi_\alpha} (S)= \rho_{\chi_\alpha} (S) P_{2,-}$ respectively $P_{2,-} \rho_{\chi_\alpha} (T)= \rho_{\chi_\alpha} (T^{-1}) P_{2,-}$ with 
\begin{equation}\label{P}
P_{2,-}=\begin{pmatrix}
0&0&0&0&0&1\\
0&0&0&1&0&0\\
0&0&1&0&0&0\\
0&1&0&0&0&0\\
0&0&0&0&1&0\\
1&0&0&0&0&0
\end{pmatrix}\end{equation}
and hence defines a symmetry of the transfer operator ${\bf{L}}_{\beta,\chi_\alpha}$.
 The matrix $P_{2,-}$ coincides with the permutation matrix $P_2$  corresponding to the permutation $\sigma_2$ in (\ref{sym2}). 
\end{proof}
\begin{remark}
For the trivial character $\chi_0$ also the map $j_{0,-}z=-z^*$ defines an automorphism of the Maass forms for the group 
$\Gamma_0(4)$, indeed this is an automorphism for all Hecke congruence subgroups $\Gamma_0(N)$. In this case the 
permutation $\lambda_{0,-}$ is the trivial permutation and the matrix $Q_{0,-}$ is determined by the permutation 
$\sigma\circ\delta$. For $\Gamma_0(4)$ this is given by (\ref{permutation}). Using (\ref{rhoS}) with $\alpha = 0$ one 
obtains for  $P_{0,-}=\rho_{\chi_0}(S) Q_{0,-}$ just the permutation $\sigma_1$ as given in (\ref{sym1}). The symmetry $\tilde{P}_1$ for $\Gamma_0(4)$ and trivial character $\chi_0$ hence corresponds to the automorphism $z\to -z^*$ of the Maass forms for this group.
\end{remark}
We have seen that for every eigenfunction $\underline{f}=\underline{f}(\zeta)$ of the operator $P_2\mathcal{L}_{\beta,\chi_{\alpha}}^+$ with eigenvalue $\lambda =\pm 1$ the function $\underline{\Psi}=\underline{\Psi}(\zeta)= P_2\rho_{\chi_{\alpha}}(T^{-1}S)P_2\underline{f}(\zeta -1)$ fulfills the functional equation
\begin{equation}
\underline{\Psi}(\zeta)=\lambda \zeta^{-2 \beta} \rho_{\chi_{\alpha}}(S) P_2 \underline{\Psi}(\frac{1}{\zeta})=\lambda J_-\underline{\Psi}(\zeta)
\end{equation}
and hence is an eigenfunction of the involution $J_-$ corresponding to the automorphism $j_-=j_{2,-}$ of the Maass forms for $\Gamma_0(4)$ and character $\chi_\alpha$. Hence this shows
\begin{proposition}
The eigenfunctions $\underline{f}=\underline{f}(\zeta)$ of the operator $P_2\mathcal{L}_{\beta,\chi_{\alpha}}^+$ with eigenvalue $\lambda=\pm 1$ correspond to Maass forms which are even respectively odd under the involution $J_-=J_{2,-}.$
\end{proposition}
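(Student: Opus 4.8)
The plan is to combine the two Lewis-type functional equations produced by an eigenfunction of $P_2\mathcal{L}_{\beta,\chi_\alpha}^+$ and then to identify the resulting solution, via the (character-extended) Deitmar--Hilgert correspondence, with the period function of a Maass form whose $J_-$-parity can be read off from the first of the two equations. Concretely, suppose $P_2\mathcal{L}_{\beta,\chi_\alpha}^+\underline{f}=\lambda\underline{f}$ with $\lambda=\pm1$. By the Proposition on the Lewis functional equation (applied to $\Gamma=\Gamma_0(4)$, $\chi=\chi_\alpha$ and the symmetry $\tilde P_{2,-}$, whose required properties were verified in the preceding Proposition), the function $\underline{\Psi}(\zeta):=P_2\rho_{\chi_\alpha}(T^{-1}S)P_2\underline{f}(\zeta-1)$ is holomorphic in the cut plane, has the prescribed decay at $0$ and at $\infty$ (inherited from holomorphy of $\underline{f}$ on the disc $D$), and satisfies both (\ref{Lewis1}) and (\ref{Lewis2}). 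Since (\ref{Lewis2}) is exactly the Lewis equation characterising the period functions of Maass wave forms for $\Gamma_0(4)$ and $\chi_\alpha$, the correspondence of Deitmar and Hilgert \cite{DH07}, extended to nontrivial character as in Section~3, yields a unique Maass form $u$ for $\Gamma_0(4)$ and $\chi_\alpha$ with vector-valued period function $\underline{\psi}=\underline{\Psi}$.

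Next I would rewrite (\ref{Lewis1}) using the relations from the preceding Proposition: $P_{2,-}=P_2$, $P_{2,-}\rho_{\chi_\alpha}(S)=\rho_{\chi_\alpha}(S)P_{2,-}$, and $P_{2,-}=\rho_{\chi_\alpha}(S)Q_{2,-}$. The right-hand side of (\ref{Lewis1}) then equals $\lambda\zeta^{-2\beta}\rho_{\chi_\alpha}(S)Q_{2,-}\underline{\Psi}(1/\zeta)$, and by the defining relation $J_{2,-}\underline{\psi}(\zeta)=\zeta^{-2\beta}Q_{2,-}\underline{\psi}(1/\zeta)$ together with the $SL(2,\mathbb{Z})$-equivariance $\underline{\psi}(gz)=\rho_{\chi_\alpha}(g)\underline{\psi}(z)$ of the period map (equivalently by the Theorem computing the period function of $J_-u$), this equals $\lambda J_-\underline{\Psi}(\zeta)$. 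Hence (\ref{Lewis1}) reads $\underline{\Psi}=\lambda J_-\underline{\Psi}$. The period map $u\mapsto\underline{\psi}$ being linear and injective, and $J_-\underline{\Psi}$ being the period function of the Maass form $J_-u$, this forces $J_-u=\lambda u$: the form $u$ is even under $J_-$ for $\lambda=+1$ and odd for $\lambda=-1$. Conversely, the second half of the Proposition on the Lewis equation shows that the period function of any Maass form with $J_-u=\lambda u$ is an eigenfunction of $P_2\mathcal{L}_{\beta,\chi_\alpha}^+$ with eigenvalue $\lambda$; together with injectivity of the period map this gives the asserted bijective correspondence between the $\lambda$-eigenspace and the $J_-$-eigenspace with eigenvalue $\lambda$.

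The steps that need care are: (i) checking that the asymptotic hypotheses in the Lewis-equation Proposition and in \cite{DH07} are genuinely satisfied by $\underline{\Psi}$ --- this is what makes the passage from ``solution of (\ref{Lewis2})'' to ``period function of an honest Maass form'' legitimate, and it must also cover the real analytic Eisenstein series as in \cite{CM98}, \cite{LZ01} so that \emph{all} eigenfunctions are accounted for; and (ii) the matrix bookkeeping tying $P_2\rho_{\chi_\alpha}(S)$, $\rho_{\chi_\alpha}(S)Q_{2,-}$ and $J_-$ together, where one should keep track of $\rho_{\chi_\alpha}(S)^2=\rho_{\chi_\alpha}(S^2)$ acting as the identity on $B(D,\mathbb{C}^6)$. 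The main obstacle is point (i): one needs the extension of the Deitmar--Hilgert correspondence to Selberg's character $\chi_\alpha$ to be precise enough that the $J_-$-parity of the Maass form matches the eigenvalue $\lambda$ \emph{exactly}; once that is in place, everything else reduces to substituting the explicit permutations $\lambda_{2,-}$, $\sigma$, $\delta$ and the tabulated character values, which has essentially been carried out already.
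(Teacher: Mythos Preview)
Your approach is exactly the paper's: the proof there consists of the single observation, displayed just before the Proposition, that equation~(\ref{Lewis1}) with $P=P_2$ rewrites as $\underline{\Psi}=\lambda J_-\underline{\Psi}$, combined with the identification of $\underline{\Psi}$ as a period function via (\ref{Lewis2}) and the Deitmar--Hilgert correspondence already discussed in Section~3.2. One small correction to your matrix bookkeeping: the right-hand side of (\ref{Lewis1}) simplifies to $\lambda\zeta^{-2\beta}Q_{2,-}\underline{\Psi}(1/\zeta)$, not $\lambda\zeta^{-2\beta}\rho_{\chi_\alpha}(S)Q_{2,-}\underline{\Psi}(1/\zeta)$ --- you need both $P_2\rho_{\chi_\alpha}(S)=\rho_{\chi_\alpha}(S)P_2$ \emph{and} $\rho_{\chi_\alpha}(S)^2=\id$ (precisely your point~(ii)) to remove the extra factor of $\rho_{\chi_\alpha}(S)$; the equivariance $\underline{\psi}(gz)=\rho_{\chi_\alpha}(g)\underline{\psi}(z)$ you invoke instead holds for the vector-valued Maass form $\underline{u}$, not for its period function.
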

Phillips and Sarnak have shown in \cite{PS85} for a conjugate character $\hat{\chi}_\alpha$ that the Maass cusp forms
odd under the corresponding conjugate involution $\hat{J}$ stay cusp forms under the deformation of this character. Hence we get as a corollary of their result
\begin{corollary}
The zero's of the Selberg zeta function for the group $\Gamma_0(4)$ and character $\chi_\alpha$  corresponding to eigenfunctions of the operator $P_2\mathcal{L}_{\beta,\chi_{\alpha}}^+$ with eigenvalue $\lambda=-1$ which for $\alpha=0$ are on the critical line $\Re \beta =\frac{1}{2}$ stay for all $\alpha$ on this line.
\end{corollary}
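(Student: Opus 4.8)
The plan is to read the $\lambda=-1$ zeros off the symmetry factorization, identify them spectrally, and then invoke the Phillips--Sarnak mechanism for the cusp that survives the deformation. First I would note that, exactly as in Theorem~\ref{symmetry} (whose argument uses only $P^2=\mathrm{id}$ together with the fact that $\tilde P$ commutes with the transfer operator, both of which hold here for $\tilde P_{2,-}$ and $\chi_\alpha$ by the Proposition establishing $\tilde P_{2,-}$ as a symmetry), one obtains the factorization
\[
Z_{\Gamma_0(4),\chi_\alpha}(\beta)=\det\!\left(1-P_2\,\mathcal{L}_{\beta,\chi_\alpha}^+\right)\det\!\left(1+P_2\,\mathcal{L}_{\beta,\chi_\alpha}^+\right).
\]
Hence the zeros ``belonging to $\lambda=-1$'' are precisely the zeros of the second factor, i.e.\ the $\beta$ at which $P_2\mathcal{L}_{\beta,\chi_\alpha}^+$ has an eigenfunction $\underline f$ with eigenvalue $-1$; by the preceding Proposition such an $\underline f$ corresponds, via $\underline\Psi(\zeta)=P_2\rho_{\chi_\alpha}(T^{-1}S)P_2\underline f(\zeta-1)$ and the Deitmar--Hilgert correspondence \cite{DH07}, to a Maass form for $\Gamma_0(4)$ and $\chi_\alpha$ that is odd under the involution $J_-=J_{2,-}$.

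Next I would fix such a zero $\beta_0$ with $\Re\beta_0=\tfrac12$ at $\alpha=0$. Away from $\beta=\tfrac12$ this is a spectral zero, so $\lambda_0=\beta_0(1-\beta_0)\ge\tfrac14$ is a cuspidal eigenvalue of $\Delta_{\chi_0}$ on $\Gamma_0(4)\backslash\mathbb{H}$ whose eigenfunction is odd under $J_{2,-}$. By the Lemma the map $j_{2,-}$ normalizes $\Gamma_0(4)$ and fixes $\chi_\alpha$ for \emph{all} $\alpha$, so $J_{2,-}$ is a well-defined involution on $L^2(\Gamma_0(4)\backslash\mathbb{H},\chi_\alpha)$ commuting with $\Delta_{\chi_\alpha}$; it splits that space orthogonally into $J_{2,-}$-even and $J_{2,-}$-odd invariant subspaces, and our eigenfunction lives in the odd one. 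The decisive observation is that $j_{2,-}z=\tfrac{z^*}{2z^*-1}$ permutes the three cusps of $\Gamma_0(4)$ by $z_1\leftrightarrow z_3$, $z_2=0\mapsto z_2$, while for $\alpha\neq0$ the character $\chi_\alpha$ is singular only in the fixed cusp $z_2$; since $j_{2,-}$ fixes that cusp and $\chi_\alpha(j_{2,-}T_2 j_{2,-})=\chi_\alpha(B^{-1})=1$, the Eisenstein series $E_{z_2}(z,s)$ is even under $J_{2,-}$, so the \emph{entire} one-dimensional continuous spectrum of $\Delta_{\chi_\alpha}$ ($\alpha\neq0$) lies in the even part and the $J_{2,-}$-odd part is purely cuspidal.

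This is precisely the configuration treated by Phillips and Sarnak in \cite{PS85}: an embedded cuspidal eigenvalue that is odd under an involution whose even part carries all of the continuous spectrum, so that the first-order (and, in their analysis, the full) obstruction responsible for the dissolution of the eigenvalue into a resonance lives in the even sector and vanishes on our odd form. After conjugating our pair $(\chi_\alpha,J_{2,-})$ to the pair $(\hat\chi_\alpha,\hat J)$ that they work with, their theorem says that the odd cusp form persists as a cusp form under the deformation, with eigenvalue $\lambda(\alpha)$ depending real-analytically on $\alpha$. Translating back through the determinant formula for $Z_{\Gamma_0(4),\chi_\alpha}$: a cuspidal, $J_{2,-}$-odd eigenvalue $\lambda(\alpha)\ge\tfrac14$ corresponds to a zero $\beta(\alpha)$ of $Z_{\Gamma_0(4),\chi_\alpha}$ with $\Re\beta(\alpha)=\tfrac12$ located in the factor $\det(1+P_2\mathcal{L}_{\beta,\chi_\alpha}^+)$, which is the assertion.

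The main obstacle is the middle step, and it really splits into two parts. One must verify carefully that the surviving Eisenstein series is $J_{2,-}$-even --- equivalently, that the one-dimensional scattering matrix of $\Delta_{\chi_\alpha}$ is $J_{2,-}$-invariant --- which is a clean but not entirely formal computation using the cusp $z_2$, the width-$4$ scaling matrix there, and the matrix of $j_{2,-}$; and one must match our symmetry with the conjugated pair $(\hat\chi_\alpha,\hat J)$ of \cite{PS85}, checking that the intertwining conjugation sends even to even and odd to odd so that their perturbation-theoretic result applies verbatim. A minor additional point is to check that at $\alpha=0$ the zeros in the $\lambda=-1$ factor lying on $\Re\beta=\tfrac12$ are genuinely accounted for by $J_{2,-}$-odd cusp forms: the odd scattering contribution coming from the cusps $z_1,z_3$ (which are singular only at $\alpha=0$) and any exceptional small eigenvalues must be seen to fall in the even sector or outside the critical line, so that no ``non-odd'' zero is inadvertently included in the statement.
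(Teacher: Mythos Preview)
Your proposal is correct and follows essentially the same route as the paper: the preceding Proposition identifies the $\lambda=-1$ eigenfunctions of $P_2\mathcal{L}_{\beta,\chi_\alpha}^+$ with Maass forms odd under $J_{2,-}$, and then the Phillips--Sarnak result \cite{PS85} (for the conjugate pair $(\hat\chi_\alpha,\hat J)$) guarantees that these odd cusp forms persist under the deformation, keeping the corresponding zeros on $\Re\beta=\tfrac12$. The paper in fact presents this corollary as an immediate consequence of \cite{PS85} without further argument, so the cusp-permutation and Eisenstein-parity verifications you spell out are extra detail beyond what the paper provides, but they are consistent with its reasoning.
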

\begin{remark}
The operator $P_2\mathcal{L}_{\beta,\chi_{\alpha}}^+$ can be used to calculate numerically the Selberg zeta function for small values of $\Im \beta$ and arbitrary $0\leq \alpha\leq \frac{1}{2}$. These numerical calculations confirm the above  Corollary and let us expect that all the zero's of the Selberg function corresponding to the eigenvalue $\lambda=1$ of the operator $P_2\mathcal{L}_{\beta,\chi_{\alpha}}^+$ for $\alpha=0$ leave the critical line when $\alpha$ becomes positive. A detailed discussion of the numerical treatment of the behaviour of the zero's of Selberg's function under the character deformation will appear elsewhere \cite{BFM10}. 
\end{remark}

\bibliographystyle{amsalpha}

\begin{thebibliography}{99999}

\raggedright

\bibitem{BFM10}
R.W.\ Bruggeman, M.\ Fraczek and D.\ Mayer,
\newblock \textit{Perturbation of Zeros of the Selberg Zeta Function for $\Gamma_0(4)$},
\newblock in preparation.


\bibitem{CM00}
C.H.\ Chang and D.\ Mayer,
\newblock \textit{Thermodynamic formalism and Selberg's zeta function for modular groups}.
\newblock Regul.\ Chaotic Dyn. \textbf{5} (2000), no.3, 281--312. \newline
\newblock \href{http://www.ams.org/mathscinet-getitem?mr=1789478}{\texttt{\small MR1789478}},
          \href{doi:10.1070/rd2000v005n03ABEH000150	http://dx.doi.org/10.1070/rd2000v005n03ABEH000150}{\texttt{\small doi:10.1070/rd2000v005n03ABEH000150}}

\bibitem{CM01}
C.H.\ Chang and D.\ Mayer,
\newblock \textit{An extension of the thermodynamic formalism approach to Selberg's zeta function for general modular groups}.
\newblock in \textit{Ergodic Theory, Analysis, and Efficient Simulation of Dynamical Systems} (ed. B. Fiedler).
\newblock Springer Verlag, Berlin, (2001) 523--562. \newline
\newblock \href{http://www.ams.org/mathscinet-getitem?mr=1850321}{\texttt{\small MR1850321}}

\bibitem{CM01a}
C.H.\ Chang and D.\ Mayer,
\newblock \textit{Eigenfunctions of the transfer operators and the period functions for modular groups}.
\newblock in \textit{Dynamical, spectral, and arithmetic zeta functions} (San Antonio, TX, 1999).
\newblock Contemp.\ Math.\ \textbf{290} (2001), 1--40. \newline
\newblock \href{http://www.ams.org/mathscinet-getitem?mr=1868466}{\texttt{\small MR1868466}}

\bibitem{CM98}
C.H.\ Chang and D.\ Mayer,
\newblock \textit{The period function of the nonholomorphic Eisenstein series for $PSL(2,\mathbb{Z})$}.
\newblock Math. Phys. Electronic J.\ \textbf{4} (1998), Paper 6, 8 pp. \newline
\newblock \href{http://www.ams.org/mathscinet-getitem?mr=1647288}{\texttt{\small MR1647288}}

\bibitem{CN79}
J.H.\ Conway and S.P.\ Norton,
\newblock \textit{Monstrous moonshine}.
\newblock Bull. London Math. Soc. \textbf{11} (1979),  308--339. \newline
\newblock \href{http://www.ams.org/mathscinet-getitem?mr=0554399}{\texttt{\small MR0554399}},
 \href{http://dx.doi.org/10.1112/blms/11.3.308}{\texttt{\small doi:10.1112/blms/11.3.308}}	         

\bibitem{DH07}
A.\ Deitmar and J.\ Hilgert,
\newblock \textit{A Lewis correspondence for submodular groups}.
\newblock Forum Math.\ \textbf{19} (2007), 1075--1099. \newline
\newblock \href{http://www.ams.org/mathscinet-getitem?mr=2367955}{\texttt{\small MR2367955}}
\href{http://dx.doi.org/10.1515/F0RUM.2007.042}{\texttt{\small doi:10.1515/F0RUM.2007.042}}	

\bibitem{E93}
 I.\ Efrat,
 \newblock \textit{Dynamics of the continued fraction map and the spectral theory of $SL(2,\mathbb{Z})$}.
 \newblock Invent. math \textbf{114} (1993), 207--218. \newline
\newblock \href{http://www.ams.org/mathscinet-getitem?mr=2468476}{\texttt{\small MR1235024}},
 \href{http://dx.doi.org/10.1007/BF01232667}{\texttt{\small doi:10.1007/BF01232667}}	

\bibitem{F10}
M.\ Fraczek, 
\newblock \textit{Character deformation of the SElberg zeta function for congruence subgroups via the transfer operator}.
\newblock PhD thesis, Clausthal Institute of Technology (2010)


\bibitem{H83}
 D.\ Hejhal,
\newblock \textit{The Selberg Trace Formula for $PSL(2,\mathbb{R})$}.
\newblock Lecture Notes in Mathematics \textbf{1001} Springer Verlag (1983), ch. X, \S 2, \S 5. \newline
\newblock \href{http://www.ams.org/mathscinet-getitem?mr=0711197}{\texttt{\small MR0711197}},
  

\bibitem{LN64}
J.\ Lehner and M.\ Newman,
\newblock \textit{Weierstrass points of $\Gamma_0(n)$}.
\newblock Ann.\ of Math. \textbf{79} no.2 (1964), 360--368. \newline
\newblock \href{http://www.ams.org/mathscinet-getitem?mr=0161841}{\texttt{\small MR0161841}}

\bibitem{LZ01}
J.\ Lewis and D.\ Zagier,
\newblock \textit{Period functions for Maass wave forms.I.}.
\newblock Ann.\ of Math. \textbf{153} (2001), 191--258. \newline
\newblock \href{http://www.ams.org/mathscinet-getitem?mr=1826413}{\texttt{\small MR1826413}},
          \href{http://dx.doi.org/10.2307/2661374}{\texttt{\small doi:10.2307/2661374}}

\bibitem{MM04}
Y.\ Manin and M.\ Marcolli,
\newblock \textit{Continued fractions, modular symbols, and noncommutative geometry}.
\newblock  Selecta Math. (N.S.) \textbf{8} (2002), no. 3, 475--521. \newline
\newblock \href{http://www.ams.org/mathscinet-getitem?mr=1826413}{\texttt{\small MR1931172}},
\href{http://dx.doi.org/10.1007/s00029-002-8113-3}{\texttt{\small doi:10.1007/s00029-002-8113-3}}

          \href{http://projecteuclid.org/euclid.cmp/1104200514}{\texttt{\small euclid.cmp/1104200514}}
	


\bibitem{MM10}
D.\ Mayer and T.\ M\"uhlenbruch,
\newblock \textit{Nearest $\lambda_q$-multiple fractions}.
\newblock  Proceedings of the CRM Spectrum and Dynamics Workshop, Montreal 2008, CRM Proceedings and Lecture Notes Series, \textbf{52} (2010), 147--184 . \newline
\newblock \href{http://arxiv.org/abs/0902.3953}{\texttt{\small arXiv:0902.3953}}

\bibitem{MMS11}
D.\ Mayer , T.\ M\"uhlenbruch and F.\ Str\"omberg,
\newblock \textit{The transfer operator for the Hecke triangle groups}.
\newblock  Proceedings of International Conference Dynamical Systems II, Denton, UNT 2009 \newline
\newblock To appear in  Discrete and Continuous Dynamical  Systems (2011).
\newblock \href{http://arxiv.org/abs/09122236}{\texttt{\small arXiv:0902.3953}}



\bibitem{MS08}
D.\ Mayer and F.\ Str\"omberg,
\newblock \textit{Symbolic dynamics for the geodesic flow on Hecke surfaces}.
\newblock Journal of Modern Dynamics \textbf{2} (2008), 581--627. \newline
\newblock \href{http://www.ams.org/mathscinet-getitem?mr=2449139}{\texttt{\small MR2449139}},
          \href{http://dx.doi.org/10.3934/jmd.2008.2.581}{\texttt{\small doi:10.3934/jmd.2008.2.581}}

\bibitem{Mu06}
 T.\ M\"uhlenbruch,
\newblock \textit{Hecke operators on period functions for $\Gamma_0(N)$}.
\newblock J. Number Th. \textbf{118} (2006), 208--235.  \newline
\newblock \href{http://arxiv.org/abs/2225281}{\texttt{\small arXiv:2225281}}
\href{	http://dx.doi.org/10.1016/j.jnt.2005.09.003}{\texttt{\small doi:10.1016/j.jnt.2005.09.003}}

\bibitem{PS85}
R.\ Phillips and P.\ Sarnak,
\newblock \textit{On cusp forms in character varieties},
\newblock Geometric and Functional Anaysis\ \textbf{4} (1994), 93--118.\newline
\newblock \href{http://www.ams.org/mathscinet-getitem?mr=1254311}{\texttt{\small MR1254311}},
          \href{	http://dx.doi.org/10.1007/BF01898362}{\texttt{\small doi:10.1007/BF01898362}}

\bibitem{Se89}
A.\ Selberg,
\newblock \textit{Remarks on the distribution of poles of Eisenstein series},
\newblock in \textit{Collected Papers}, Vol.\ 2, 15--46, Springer Verlag, 1989. \newline
\newblock \href{http://www.ams.org/mathscinet-getitem?mr=1295844}{\texttt{\small MR1295844}}

\bibitem{St08}
F.\ Str\"omberg,
\newblock \textit{Computation of Selberg's zeta functions on Hecke triangle groups}. \newline
\newblock \href{http://arxiv.org/abs/0804.4837}{\texttt{\small arXiv:0804.4837}}

\bibitem{V90}
A.\ Venkov,
\newblock \textit{Spectral Theory of Automorphic Functions}. \newline
\newblock Mathematics and its Applications (Soviet Series), ch. 7, Kluwer, Dordrecht (1990).
\newblock \href{http://www.ams.org/mathscinet-getitem?mr=1135112}{\texttt{\small MR1135112}}

\bibitem{Z02}
D.\ Zagier,
\newblock \textit{New points of view on the Selberg zeta function},
\newblock in \textit{Proceedings of Japanese-German Seminar "Explicit structures of modular forms and zeta functions", Hakuba,  (2002), 1--10, Ryushi-do}.

\end{thebibliography}


\end{document}